\begin{document}

\markboth{W. Ma}{A Generalized Digit Map}

%%%%%%%%%%%%%%%%%%%%% Publisher's Area please ignore %%%%%%%%%%%%%%%
%
\catchline{}{}{}{}{}
%
%%%%%%%%%%%%%%%%%%%%%%%%%%%%%%%%%%%%%%%%%%%%%%%%%%%%%%%%%%%%%%%%%%%%

\title{A Generalized Digit Map: Periodicity, Prouhet-Tarry-Escott Solutions, and Summation Identities}

\author{Wanli Ma}

\address{School of Mathematical Sciences, East China Normal University,\\
Shanghai, China\\
\email{mawanli271828314159@gmail.com} }

%\author{Second Author}

%\address{Group, Laboratory, Address\\
%City, State ZIP/Zone, Country\\
%author\_id@domain\_name }

\maketitle

\begin{history}
\received{(Day Month Year)}
\accepted{(Day Month Year)}
\end{history}

\begin{abstract}
	We investigate arithmetic properties of the sequence $b(n)=\mathcal{B}_{M,N}(n)\bmod M$ obtained from the base–$M$ to base–$N$ shift map $\mathcal{B}_{M,N}$.
	We prove that $b(n)$ is ultimately periodic exactly when every prime divisor of $M$ also divides $N$; in that case we bound (and, for prime powers, determine) the minimal period.
	When the condition fails, $b(n)$ supplies new solutions to the Prouhet–Tarry–Escott problem.
	To  analyze this situation we introduce a family of finite–difference identities and use them to evaluate two weighted multivariate polynomial sums, thereby extending identities that arise from the classical sum-of-digits function ($N=1$).
\end{abstract}

\keywords{sum-of-digits function; automatic sequence; Prouhet-Tarry-Escott(PTE) problem; periodicity of sequences.}

\ccode{Mathematics Subject Classification 2020: 11A63, 11P05, 11B83}

\section{Introduction}
	
	Fix integers \(M\ge 2\) and \(N\ge 1\).
	Writing an integer \(n\) in base \(M\) as
	\(
	n=\sum_{i=0}^{\infty} d_i(n)\,M^{\,i},
	d_i(n)\in\{0,\dots ,M-1\},
	\)
	we consider the map that keeps the same digit string but interprets it in base \(N\).

	\begin{definition}
		For \(M\ge 2\) and \(N\ge 1\) the \emph{base–shifting map}
		\begin{equation}\label{eq:base-shift}
			\mathcal{B}_{M,N}(n):=\sum_{i=0}^{\infty} d_i(n)\,N^{\,i},
		\end{equation}
		is well–defined for every \(n\in\mathbb N\).
	\end{definition}

	The sequence studied in this paper is the reduction of \(\mathcal{B}_{M,N}(n)\) modulo \(M\):
	\[
	b(n):=\mathcal{B}_{M,N}(n)\pmod{M},\qquad n\ge 0.
	\]
	This construction has appeared independently in recent literature. Shallit \cite{shallit2024rarefied} uses this map as a ``pseudopower'' function to analyze rarefied Thue-Morse sums, while Rampersad and Shallit \cite{rampersad2025rudin} employ the case $\mathcal{B}_{2,4}(n)$ in an automata-theoretic study of the Rudin-Shapiro sequence. While the combinatorial properties of $b(n)$ were explored in \cite{mawanli-2025-baseshift}, this paper focuses on its number-theoretic and algebraic structure.
	
	In the special case $N=1$, our map $\mathcal{B}_{M,1}(n)$ reduces to the classical base-$M$ \textit{sum-of-digits function}, $s_M(n)$. The sequence $s_M(n) \pmod M$ is a central object in combinatorics on words \cite{allouche2003automatic}. For example, its combinatorial properties, particularly the conditions for it to be overlap-free, were established by Allouche and Shallit \cite{AlloucheShallit2000sumofdigit} and later revisited with different methods by Cusick and Ciungu \cite{CusickCiungu2011sumofdigit}. Frid \cite{Frid-2001-overlap} also studied this sequence within the algebraic framework of symmetric D0L words.
	
	The generalized sequence $b(n)$ behaves differently based on the relationship between the prime factors of $M$ and $N$. The first part of our investigation provides a complete answer to the question of periodicity. We prove that the sequence $b(n)$ is ultimately periodic if and only if every prime factor of $M$ is also a prime factor of $N$ (Theorem~\ref{characterization of periodicity}).

	When this condition holds, we also provide an exact formula for the minimal period in cases where $M$ and $N$ are powers of the same prime, and establish general bounds for the period length otherwise.
	
	The non-periodic case, where $\operatorname{rad}(M) \nmid \operatorname{rad}(N)$, leads to an algebraic structure that we  analyze here. We first show that the sequence $b(n)$ provides a new family of solutions to the Prouhet-Tarry-Escott problem. This connection is established through a sequence of finite difference identities which generalize those known for the sum-of-digits function. The main results of this paper are the proofs of two closed-form evaluations for certain weighted multiple sums, which extend conjectures originally stated for the sum-of-digits function by Byszewski et al. \cite{byszewski-2015-identities} and later proved by Vignat and Wakhare \cite{vignat2018sumsupposition}. The first identity evaluates a weighted sum of powers of a linear multivariate polynomial, while the second evaluates a weighted sum of powers of a more complex polynomial involving both the variables $n_j$ and their images $\mathcal{B}_{M,N}(n_j)$.
	
	The paper is organized as follows. Section 2 is devoted to the periodic case, including the proof of the periodicity criterion and an analysis of the period length. Section 3 introduces the tools for the non-periodic case, including the connection to the Prouhet-Tarry-Escott problem and the definition of an auxiliary sequence $(\beta_k^{(p)})$. Section 4 establishes the key single and multiple finite difference identities. Finally, in Section 5, we apply these tools to prove the two main theorems that generalize the aforementioned conjectures.
	
\section{Periodicity of the Base-Shifting Sequence}
	
	\noindent The periodicity of digital sequences is a classical topic. A foundational result by Gelfond \cite{gelfond1968} concerns the sum-of-digits function, which in our notation is the case $N=1$. Gelfond's theorem implies that the sequence $\mathcal{B}_{M,1}(n) \pmod M$ is uniformly distributed for all $M \ge 2$, and is therefore never ultimately periodic. Our Theorem \ref{characterization of periodicity} complements this by providing a complete periodicity criterion for the entire family of sequences generated by $\mathcal{B}_{M,N}(n)$ for any $N \ge 1$, establishing the precise condition that separates these non-periodic cases from the periodic ones.
	\begin{theorem}\label{characterization of periodicity}
		Let $M \ge 2$ and $N \ge 1$ be integers. The sequence $b(n) = \mathcal{B}_{M,N}(n) \pmod M$ is ultimately periodic if and only if every prime factor of $M$ is also a prime factor of $N$.
	\end{theorem}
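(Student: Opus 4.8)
The plan is to prove the two implications separately; the forward (sufficiency) direction is routine, and the converse carries all the content. For the forward direction, assume $\operatorname{rad}(M)\mid\operatorname{rad}(N)$ and let $a$ be the largest exponent appearing in the prime factorization of $M$. Every prime $p\mid M$ then divides $N$, so $v_p(N^a)=a\,v_p(N)\ge a\ge v_p(M)$, whence $M\mid N^a$ and therefore $N^i\equiv 0\pmod M$ for all $i\ge a$. Consequently $\mathcal{B}_{M,N}(n)\equiv\sum_{i=0}^{a-1}d_i(n)\,N^i\pmod M$, an expression that depends only on the last $a$ base-$M$ digits of $n$, i.e. only on $n\bmod M^a$. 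Hence $b(n+M^a)=b(n)$ for all $n$, so $b$ is purely periodic with period dividing $M^a$.

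For the converse I would first record a local ``increment identity''. Write $r(n)$ for the number of trailing base-$M$ digits of $n$ equal to $M-1$. Passing from $n$ to $n+1$ replaces the low block $d_r\underbrace{(M-1)\cdots(M-1)}_{r}$ by $(d_r+1)\underbrace{0\cdots0}_{r}$, where $r=r(n)$, and leaves higher digits fixed; therefore
\[
\mathcal{B}_{M,N}(n+1)-\mathcal{B}_{M,N}(n)=N^{r}-(M-1)\sum_{i=0}^{r-1}N^{i}\equiv\sum_{i=0}^{r}N^{i}=:S_{r+1}\pmod M,
\]
using $M-1\equiv-1$. In particular $b(n+1)-b(n)\equiv S_{r(n)+1}\pmod M$. (For $N=1$ this is the familiar $s_M(n+1)-s_M(n)\equiv r(n)+1$.)

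Now suppose some prime $p\mid M$ does not divide $N$, and suppose for contradiction that $b$ is ultimately periodic with period $T$ beyond some $n_0$. Then the difference sequence is also periodic, so $S_{r(m+T)+1}\equiv S_{r(m)+1}\pmod M$ for every $m\ge n_0$. The decisive step is to feed in the test integers $m=M^{j}-1$ for large $j$ — precisely those $j$ with $M^{j}>n_0+1$ and $M^{j-1}>T$. For such $m$ one has $r(m)=j$; on the other hand $m+T=M^{j}+(T-1)$ with $0\le T-1<M^{j-1}$, so the base-$M$ digit of $m+T$ in position $j-1$ equals $0\ (\ne M-1)$, which confines the trailing run of $(M-1)$'s of $m+T$ to positions below $j-1$, making it equal to that of $T-1$; thus $r(m+T)=r(T-1)=:\tau$, a constant independent of $j$. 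Hence $S_{j+1}\equiv S_{\tau+1}\pmod M$ for all sufficiently large $j$, so $(S_j\bmod M)$ is eventually constant, which forces $N^{j}=S_{j+1}-S_{j}\equiv 0\pmod M$ for all large $j$ — impossible, since $p\nmid N$ gives $p\nmid N^{j}$ while $p\mid M$.

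I expect the main obstacle to be exactly the choice of test integers: one needs a family with $r(m)$ unbounded but $r(m+T)$ bounded, and $m=M^{j}-1$ works because the carry it triggers deposits a single high digit $1\ (\ne M-1)$ that caps the trailing run of $m+T$. Once that family is identified, ultimate periodicity directly manufactures the contradiction $N^{j}\equiv 0\pmod M$, and the remaining steps are elementary base-$M$ digit bookkeeping.
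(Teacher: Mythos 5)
Your proof is correct. The sufficiency half is essentially the paper's argument (choose $a$ with $M\mid N^a$, so $b(n)$ depends only on $n\bmod M^a$), but your necessity half takes a genuinely different route. The paper compares $b(M^k)$ with $b(M^k-P)$ directly, reduces modulo a single prime $p\mid M$ with $p\nmid N$, sums the geometric series, and then splits into the cases $N\equiv 1$ and $N\not\equiv 1\pmod p$ to reach a contradiction. You instead pass to the first-difference sequence via the increment identity $b(n+1)-b(n)\equiv S_{r(n)+1}\pmod M$ (where $r(n)$ counts trailing digits $M-1$), observe that ultimate periodicity of $b$ forces $S_{r(m+T)+1}\equiv S_{r(m)+1}$, and choose the test points $m=M^j-1$ so that $r(m)=j$ is unbounded while $r(m+T)=r(T-1)$ is constant; this makes $(S_j\bmod M)$ eventually constant and hence $N^j\equiv 0\pmod M$ for large $j$, contradicting $p\mid M$, $p\nmid N$. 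The two arguments exploit the same phenomenon (the long run of digits $M-1$ just below a power of $M$), but yours works modulo $M$ itself, needs no case analysis on $N\bmod p$, and yields the cleaner contradiction $M\mid N^j$; the paper's version is more computational but produces the explicit congruence $N^{k+1}\equiv C'\pmod p$ along the way. Both are complete; your edge cases ($r=0$, a full carry $n=M^k-1$, and $T=1$) all check out against the increment identity.
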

	This condition can be stated succinctly using the radical of an integer, $\operatorname{rad}(k)$, which is defined as the product of the distinct prime factors of $k$. The theorem could be stated as $b(n)$ is ultimately periodic if and only if $\operatorname{rad}(M) \mid \operatorname{rad}(N)$.
	
	\begin{proof}
		First, we prove sufficiency. Assume every prime factor of $M$ is also a prime factor of $N$. Let the prime factorization of $M$ be $M = p_1^{a_1} p_2^{a_2} \cdots p_r^{a_r}$. The condition implies that $N$ is divisible by $p_1 p_2 \cdots p_r$. Let $k_0 = \max(a_1, a_2, \dots, a_r)$. Then $p_i^{k_0} \mid N^{k_0}$ for each $i$, which implies $M \mid N^{k_0}$, so $N^{k_0} \equiv 0 \pmod M$.
		
		Consider any integer $n \ge 0$. We can write $n = q M^{k_0} + r$, where $q \ge 0$ and $0 \le r < M^{k_0}$. The base-$M$ representation of $n$ is formed by concatenating the digits of $q$ (shifted by $k_0$ positions) and the digits of $r$. Specifically, if $r = \sum_{i=0}^{k_0-1} d_i(r) M^i$ and $q = \sum_{j=0}^{\infty} d_j(q) M^j$, then $n = \sum_{i=0}^{k_0-1} d_i(r) M^i + \sum_{j=0}^{\infty} d_j(q) M^{j+k_0}$. Applying the base-shifting map $\mathcal{B}_{M,N}$ gives
		\begin{align*}
			\mathcal{B}_{M,N}(n) &= \sum_{i=0}^{k_0-1} d_i(r) N^i + \sum_{j=0}^{\infty} d_j(q) N^{j+k_0} \\
			&= \mathcal{B}_{M,N}(r) + N^{k_0} \mathcal{B}_{M,N}(q).
		\end{align*}
		Taking this modulo $M$, and since $N^{k_0} \equiv 0 \pmod M$, the second term vanishes, leaving $b(n) \equiv \mathcal{B}_{M,N}(r) \pmod M$. The value of $b(n)$ depends only on $r = n \pmod{M^{k_0}}$. This implies that for any $n_1, n_2 \ge 0$, if $n_1 \equiv n_2 \pmod{M^{k_0}}$, then $b(n_1) = b(n_2)$. This establishes that the sequence is periodic with a period $P$ that divides $M^{k_0}$.
		
		For necessity, we must show that if the sequence is ultimately periodic, the prime factor condition must hold. We use a proof by contradiction. Assume the sequence $b(n)_{n \ge 0}$ is ultimately periodic. Let its period be $P > 0$. This means there exists an integer $n_0$ such that for all $n \ge n_0$, we have $b(n+P) = b(n)$, which is equivalent to $\mathcal{B}_{M,N}(n+P) \equiv \mathcal{B}_{M,N}(n) \pmod M$. For the sake of contradiction, assume there exists a prime $p$ such that $p \mid M$ but $p \nmid N$.
		
		Choose an integer $k$ large enough such that $M^k > P$ and $M^k - P \ge n_0$. By the periodicity assumption, we must have $b(M^k) = b((M^k - P) + P) = b(M^k - P)$. We analyze each term modulo our chosen prime $p$.
		
		First, for the term $b(M^k)$, the base-$M$ expansion of $M^k$ is $d_k(M^k)=1$ and $d_i(M^k)=0$ for $i \ne k$. Thus, $\mathcal{B}_{M,N}(M^k) = 1 \cdot N^k = N^k$. This implies $b(M^k) \equiv N^k \pmod p$.
		
		Second, for the term $b(M^k - P)$, let the base-$M$ expansion of $P$ be $P = \sum_{i=0}^{j} p_i M^i$ with $p_j \ne 0$. We chose $k$ large enough so that $k > j$. The base-$M$ digits of $M^k-P$, denoted $d_i(M^k-P)$, are as follows: for $0 \le i \le j$, the digits are some values $d_i'$ which depend on $P$ but not on $k$; for $j+1 \le i \le k-1$, the digits are $M-1$; and for $i \ge k$, the digits are $0$. Applying the map $\mathcal{B}_{M,N}$ and reducing modulo $p$, we have
		\begin{align*}
			\mathcal{B}_{M,N}(M^k-P) &= \sum_{i=0}^{j} d_i' N^i + \sum_{i=j+1}^{k-1} (M-1) N^i \\
			&\equiv \sum_{i=0}^{j} d_i' N^i + \sum_{i=j+1}^{k-1} (-1) N^i \pmod p.
		\end{align*}
		Let $C = \sum_{i=0}^{j} d_i' N^i$. This value $C$ is a constant that depends on $P$ and $N$, but not on $k$. So, $b(M^k-P) \pmod p \equiv C - \sum_{i=j+1}^{k-1} N^i \pmod p$.
		
		Equating the two expressions from the periodicity condition yields the congruence 
		\begin{equation}
			N^k \equiv C - \sum_{i=j+1}^{k-1} N^i \pmod p,
			\end{equation}
			
		 which must hold for all sufficiently large integers $k$. We now analyze this based on the value of $N \pmod p$.
		
		Case A: $N \equiv 1 \pmod p$. The sum becomes a sum of ones, and the congruence becomes
		\begin{align*}
			1^k &\equiv C - \sum_{i=j+1}^{k-1} 1^i \pmod p \\
			1 &\equiv (C+j+1) - k \pmod p.
		\end{align*}
		The left side is constant, while the right side changes linearly with $k$. It is impossible for this to hold for all large $k$, which is a contradiction.
		
		Case B: $N \not\equiv 1 \pmod p$. The sum is a geometric series. Since $p \nmid N$ and $N \not\equiv 1 \pmod p$, its denominator $N-1$ is non-zero modulo $p$. The congruence becomes
		\begin{align*}
			N^k &\equiv C - \frac{N^k - N^{j+1}}{N-1} \pmod p \\
			N^k(N-1) &\equiv C(N-1) - (N^k - N^{j+1}) \pmod p \\
			N^{k+1} &\equiv C(N-1) + N^{j+1} \pmod p.
		\end{align*}
		Let $C' = C(N-1) + N^{j+1}$. This is another constant that does not depend on $k$. The condition is $N^{k+1} \equiv C' \pmod p$. This must hold for all sufficiently large $k$. However, since $p \nmid N$, the sequence $N^{k+1} \pmod p$ is periodic by Fermat's Little Theorem and is not constant (as we are in the case $N \not\equiv 1 \pmod p$). For the congruence to hold for all large $k$, we would need $N^{k+1} \equiv N^{k+2} \pmod p$, which implies $N \equiv 1 \pmod p$, a contradiction to the assumption of this case.
		
		In both cases, we reach a contradiction. Therefore, our initial assumption that "there exists a prime $p$ such that $p \mid M$ but $p \nmid N$" must be false. This completes the proof of necessity.
	\end{proof}
	
	\begin{theorem}\label{thm:period_prime_power}
		Let $p$ be a prime, and let $M = p^{a}$ and $N = p^{b}$ for integers $a,b \ge 1$. The sequence $b(n) = \mathcal{B}_{M,N}(n) \pmod M$ is purely periodic. Its minimal period is
		\[ P_{\min} = p^{e}, \quad \text{where} \quad e = at - b(t-1) \quad \text{and} \quad t = \left\lceil \frac{a}{b} \right\rceil. \]
	\end{theorem}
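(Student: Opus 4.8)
The plan is to determine exactly which base-$p$ digits of $n$ influence $b(n)$, show that these are precisely the lowest $e$ of them, and then exhibit a single place where changing the $(e-1)$-st digit changes $b(n)$.

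First I would reduce the map modulo $M$. Write $n$ in base $M=p^{a}$ as $n=\sum_{i} d_i(n)\,p^{ai}$ with $d_i(n)\in\{0,\dots,p^{a}-1\}$, so that $\mathcal{B}_{M,N}(n)=\sum_i d_i(n)\,p^{bi}$. Modulo $p^{a}$ only the terms with $bi<a$ survive, that is, $i\in\{0,\dots,t-1\}$ with $t=\lceil a/b\rceil$; the two-sided inequality $b(t-1)<a\le bt$ characterizing $t$ is exactly what truncates the sum at $i=t-1$. Furthermore $d_i(n)\,p^{bi}\bmod p^{a}$ depends on $d_i(n)$ only through $d_i(n)\bmod p^{\,a-bi}$, so that
\[
b(n)\equiv\sum_{i=0}^{t-1}\bigl(d_i(n)\bmod p^{\,a-bi}\bigr)\,p^{bi}\pmod{p^{a}}.
\]

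Next, since $d_i(n)=\lfloor n/p^{ai}\rfloor\bmod p^{a}$, the quantity $d_i(n)\bmod p^{\,a-bi}$ is a function of $n\bmod p^{\,ai+(a-bi)}=p^{\,a+(a-b)i}$. These exponents are nondecreasing in $i$ when $a\ge b$, while if $a<b$ then $t=1$ and only $i=0$ occurs; in every case their maximum over $0\le i\le t-1$ equals $a+(a-b)(t-1)=at-b(t-1)=e$. Hence $b(n)$ depends only on $n\bmod p^{e}$, which forces $b(n+p^{e})=b(n)$ for all $n\ge 0$: the sequence is purely periodic with period dividing $p^{e}$, so $P_{\min}=p^{e'}$ for some $e'\le e$. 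To conclude $e'=e$ it suffices to rule out the period $p^{e-1}$ (any proper power $p^{e'}$ with $e'<e$ divides it). For this I would compare $n=0$ with $n=p^{e-1}$: writing $e-1=a(t-1)+s$ with $s=a-b(t-1)-1$---which lies in $\{0,\dots,a-1\}$ precisely because $b(t-1)<a$---the number $p^{e-1}$ has a single nonzero base-$M$ digit $d_{t-1}(p^{e-1})=p^{s}$, whence $\mathcal{B}_{M,N}(p^{e-1})=p^{s}\cdot p^{b(t-1)}=p^{\,a-1}$. Thus $b(p^{e-1})\equiv p^{a-1}\not\equiv 0=b(0)\pmod{p^{a}}$, so $p^{e-1}$ is not a period and $P_{\min}=p^{e}$.

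The only real delicacy is the bookkeeping between base $p$ and base $p^{a}$ and the interaction of the two nested sums; the crux is to invoke the inequality $b(t-1)<a\le bt$ in the right places, since the upper bound $a\le bt$ is what makes $p^{e}$ a period while the strict lower bound $b(t-1)<a$ is what makes the top digit $d_{t-1}$ genuinely affect $b(n)$ and keeps $s=a-b(t-1)-1$ a legitimate digit-position exponent. Once the reduction in the first step is set up carefully, the remaining steps are short.
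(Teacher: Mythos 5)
Your proof is correct and follows essentially the same route as the paper: both truncate $\mathcal{B}_{M,N}(n)\bmod p^a$ at digit index $t-1$ using $b(t-1)<a\le bt$, and both certify minimality by computing $\mathcal{B}_{M,N}(p^{e-1})=p^{s}N^{t-1}=p^{a-1}\not\equiv 0\pmod{p^a}$ with $s=a-b(t-1)-1$. The only (cosmetic) difference is that you establish the period by showing $b(n)$ is a function of $n\bmod p^{e}$ outright, whereas the paper computes the difference $b(n+p^{e})-b(n)\equiv p^{s}N^{t-1}=M\equiv 0$ directly; both rest on the same digit decomposition.
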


	\begin{proof}
		The proof consists of two steps: first, we establish that $P = p^e$ is a period of the sequence, and second, we prove that this period is minimal.
		
		Let $t = \lceil a/b \rceil$. By definition, this implies the inequality $b(t-1) < a \le bt$. A key consequence is that for any integer $k \ge t$, the $p$-adic valuation of $N^k$ satisfies $v_p(N^k) = bk \ge bt \ge a = v_p(M)$. This means $M \mid N^k$, which gives the crucial congruence $N^k \equiv 0 \pmod M$.
		
		Let $n \ge 0$ be any integer. We can decompose $n$ by separating its first $t-1$ base-$M$ digits from the rest, writing $n = r + q M^{t-1}$ where $0 \le r < M^{t-1}$ and $q \ge 0$. Let $q_0 = d_0(q)$ be the lowest digit of $q$. Using the linearity of $\mathcal{B}_{M,N}$ and the fact that higher powers of $N$ vanish modulo $M$, we find
		\begin{equation}
			\begin{split}
				\mathcal{B}_{M,N}(n) &= \mathcal{B}_{M,N}(r) + N^{t-1} \mathcal{B}_{M,N}(q) \\
				&= \mathcal{B}_{M,N}(r) + N^{t-1} \sum_{j=0}^{\infty} d_j(q)N^j \\
				&\equiv \mathcal{B}_{M,N}(r) + q_0 N^{t-1} \pmod M.
			\end{split}
		\end{equation}
		
		Now, consider the candidate period $P = p^e$. Let $s = a - b(t-1)$. From the inequality for $t$, we have $0 < s \le b$. We can express the period as $P = p^s M^{t-1}$. Then for any $n$, we have $n+P = r + (q+p^s)M^{t-1}$. Applying the same logic to $n+P$ yields $\mathcal{B}_{M,N}(n+P) \equiv \mathcal{B}_{M,N}(r) + d_0(q+p^s) N^{t-1} \pmod M$. Since $d_0(q+p^s) \equiv q+p^s \pmod M \equiv q_0 + p^s \pmod M$, the difference is
		\begin{align*}
			\mathcal{B}_{M,N}(n+P) - \mathcal{B}_{M,N}(n) &\equiv \left( d_0(q+p^s) - q_0 \right) N^{t-1} \pmod M \\
			&\equiv p^s N^{t-1} \pmod M.
		\end{align*}
		This final term evaluates to $p^s N^{t-1} = p^{a-b(t-1)} (p^b)^{t-1} = p^a = M$. Thus, $\mathcal{B}_{M,N}(n+P) - \mathcal{B}_{M,N}(n) \equiv 0 \pmod M$. This holds for all $n \ge 0$, so the sequence is purely periodic with period $P=p^e$.
		
		To prove minimality, we show that no smaller power of $p$ is a period. It suffices to show that $P' = p^{e-1}$ is not a period. If it were, we would require $b(P') = b(0) = 0$. We evaluate $b(P')$. We can write $P' = p^{s-1}M^{t-1}$. To see that $p^{s-1}$ is a valid single digit in base $M$, we must check that $p^{s-1} < M = p^a$, which is equivalent to $s-1 < a$. This holds because $s-1-a = (a-b(t-1))-1-a = -b(t-1)-1 < 0$ for $b,t \ge 1$. Thus, the base-$M$ representation of $P'$ has only one non-zero digit, $d_{t-1}(P') = p^{s-1}$. Applying the map gives
		\[ \mathcal{B}_{M,N}(P') = d_{t-1}(P') N^{t-1} = p^{s-1} N^{t-1} = p^{s-1} (p^b)^{t-1} = p^{s-1+b(t-1)} = p^{a-1}. \]
		So, $b(P') \equiv p^{a-1} \pmod M$. Since $a \ge 1$, $p^{a-1}$ is not divisible by $M=p^a$, so $b(P') \not\equiv 0 \pmod M$. We have found $b(P') \ne b(0)$, so $P'$ is not a period. From the sufficiency proof of Theorem \ref{characterization of periodicity}, we know a power of $p$ is a period, which implies the minimal period must also be a power of $p$. Since $p^e$ is a period and $p^{e-1}$ is not, we conclude that the minimal period is $P_{\min} = p^e$.
	\end{proof}

	\begin{proposition}\label{Estimation of the Minimal Period}[Estimation of the Minimal Period]
		Let $M = \prod_{p|M} p^{a_p}$ and $N = K \cdot \prod_{p|M} p^{b_p}$ be integers with $a_p, b_p \ge 1$ and $\gcd(K,M)=1$. The sequence $b(n) = \mathcal{B}_{M,N}(n) \pmod M$ is ultimately periodic. Define $\overline{t} := \max_{p|M} \{ \lceil a_p / b_p \rceil \}$. Then the minimal period, $P_{\min}$, is bounded by
		\[ M \le P_{\min} \le M^{\overline{t}}. \]
	\end{proposition}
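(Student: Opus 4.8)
The plan is to establish the two inequalities separately, each by sharpening an argument already available.

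\emph{Upper bound.} The role of the hypothesis $\gcd(K,M)=1$ is to pin down valuations: for every prime $p\mid M$ we have $v_p(N)=b_p$, hence $v_p(N^{k})=b_p k$, so $M\mid N^{k}$ holds exactly when $b_p k\ge a_p$ for all $p\mid M$, i.e. when $k\ge\lceil a_p/b_p\rceil$ for all such $p$, i.e. when $k\ge\overline{t}$. In particular $N^{\overline{t}}\equiv 0\pmod M$. I would then rerun the digit-splitting computation from the sufficiency half of Theorem~\ref{characterization of periodicity} with the sharp exponent $\overline{t}$ in place of $k_0$: decomposing $n=qM^{\overline{t}}+r$ with $0\le r<M^{\overline{t}}$ gives $\mathcal{B}_{M,N}(n)=\mathcal{B}_{M,N}(r)+N^{\overline{t}}\mathcal{B}_{M,N}(q)\equiv\mathcal{B}_{M,N}(r)\pmod M$, so $b(n)$ depends only on $n\bmod M^{\overline{t}}$. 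Hence $M^{\overline{t}}$ is a period, $P_{\min}\mid M^{\overline{t}}$, and $P_{\min}\le M^{\overline{t}}$.

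\emph{Lower bound.} Here I would exhibit a window of $M$ consecutive terms that are pairwise distinct and lie inside the eventually periodic part of the sequence. Choose $k\ge\overline{t}$ with $M^{k}$ larger than any pre-period threshold. For $0\le j<M$ the base-$M$ expansion of $M^{k}+j$ is $j$ in position $0$, zeros in positions $1,\dots,k-1$, and a single $1$ in position $k$ (no carry, since $j<M$); hence $\mathcal{B}_{M,N}(M^{k}+j)=j+N^{k}\equiv j\pmod M$. Thus $b(M^{k}),b(M^{k}+1),\dots,b(M^{k}+M-1)$ run through $0,1,\dots,M-1$ and are pairwise distinct. If some period $P$ satisfied $0<P<M$, then $b(M^{k})=b(M^{k}+P)$ would give $0\equiv P\pmod M$, a contradiction; therefore $P_{\min}\ge M$.

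I do not anticipate a genuine difficulty: the statement is the combination of the sharpened vanishing exponent $\overline{t}$ with the trivial observation that $\mathcal{B}_{M,N}$ fixes every single-digit integer. The only two points needing attention are (i) invoking $\gcd(K,M)=1$ to obtain the \emph{exact} valuation $v_p(N)=b_p$ — without it $\overline{t}$ need not be the least exponent annihilating $M$ — and (ii) placing the distinct block $M^{k},\dots,M^{k}+M-1$ beyond the pre-period so that the relation $b(n)=b(n+P)$ genuinely applies to it. (The sufficiency proof in fact shows pure periodicity, which makes (ii) automatic, but the block argument does not rely on this.)
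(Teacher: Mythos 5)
Your proposal is correct and follows essentially the same path as the paper: the upper bound is the identical digit-splitting argument with $N^{\overline{t}}\equiv 0\pmod M$, and the lower bound rests on the same mechanism (adding $P<M$ to a number with units digit $0$ changes only the units digit, which contributes exactly $P$ to $\mathcal{B}_{M,N}$ modulo $M$, forcing $P\equiv 0\pmod M$). The only cosmetic difference is your choice of witness $n=M^{k}$ with $k\ge\overline{t}$ (giving the explicit distinct block $b(M^k+j)=j$) versus the paper's $n=kM$ for large $k$, which reaches the same contradiction without needing $N^{k}\equiv 0\pmod M$.
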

	
	\begin{proof}
		We first establish the upper bound by showing that $M^{\overline{t}}$ is a period of the sequence $b(n)$. By the definition of $\overline{t}$, for any prime $q$ that divides $M$, we have $\overline{t} \ge \lceil a_q / b_q \rceil$, which implies $b_q \overline{t} \ge a_q$. In terms of $p$-adic valuations, this means $v_q(N^{\overline{t}}) = b_q \overline{t} \ge a_q = v_q(M)$. Since this holds for all prime factors $q$ of $M$, it follows that $M \mid N^{\overline{t}}$, and so $N^{\overline{t}} \equiv 0 \pmod M$.
		
		Now, let $n \ge 0$ be an arbitrary integer. By the division algorithm, we can write $n = q \cdot M^{\overline{t}} + r$ for some integers $q \ge 0$ and $0 \le r < M^{\overline{t}}$. The base-$M$ digits of $n$ are effectively a concatenation of the digits of $r$ and the shifted digits of $q$. The map $\mathcal{B}_{M,N}$ then gives
		\begin{align*}
			\mathcal{B}_{M,N}(n) &= \mathcal{B}_{M,N}(r) + N^{\overline{t}} \mathcal{B}_{M,N}(q) \\
			&\equiv \mathcal{B}_{M,N}(r) \pmod M,
		\end{align*}
		where the second term vanishes due to the congruence $N^{\overline{t}} \equiv 0 \pmod M$. This shows that the value of $b(n)$ depends only on $r = n \pmod{M^{\overline{t}}}$. Consequently, the sequence is periodic with a period that divides $M^{\overline{t}}$. Since the minimal period $P_{\min}$ must divide any period, we have $P_{\min} \le M^{\overline{t}}$, establishing the upper bound.
		
		Next, we establish the lower bound, $P_{\min} \ge M$. Let $P$ be any period of the sequence. Assume for contradiction that $0 < P < M$. Since the sequence is ultimately periodic, there exists an integer $n_0$ such that $b(n+P) = b(n)$ for all $n \ge n_0$. To derive a contradiction, we choose an integer $k$ large enough such that $n = kM \ge n_0$.
		
		For this choice of $n$, the base-$M$ representation of $n=kM$ has its lowest digit $d_0(kM)=0$, and its higher digits are those of $k$ shifted, so $\mathcal{B}_{M,N}(kM) = N \cdot \mathcal{B}_{M,N}(k)$. Because $0<P<M$, the number $n+P=kM+P$ has a lowest digit $d_0(kM+P)=P$, while its higher digits are identical to those of $kM$. This gives
		\begin{align*}
			\mathcal{B}_{M,N}(kM+P) &= d_0(kM+P)N^0 + \sum_{i=1}^{\infty} d_i(kM)N^i \\
			&= P + N \cdot \mathcal{B}_{M,N}(k).
		\end{align*}
		The periodicity condition $b(kM+P) = b(kM)$ implies that these two expressions must be congruent modulo $M$:
		\[ P + N \cdot \mathcal{B}_{M,N}(k) \equiv N \cdot \mathcal{B}_{M,N}(k) \pmod M. \]
		Subtracting the common term from both sides yields $P \equiv 0 \pmod M$. This contradicts our assumption that $0 < P < M$. Therefore, no period can exist in this range. Any positive period $P$ must satisfy $P \ge M$, and this must also hold for the minimal period. This completes the proof.
	\end{proof}
	
	Numerical evidence suggests the following more precise formula for the minimal period under certain conditions, which we state as a conjecture.
	\begin{conjecture}\label{conjecture about general period}
		Let $M = \prod_{p|M} p^{a_p}$ and $N = K \cdot \prod_{p|M} p^{b_p}$ be integers with $a_p, b_p \ge 1$ and $\gcd(K,M) = 1$. If for every prime factor $p|M$, we have $t_p := \lceil a_p / b_p \rceil \le 2$, then the minimal period of $b(n)$ is given by
		\[ P_{\min} = \prod_{p|M} p^{e_p}, \]
		where
		\[ e_p = a_p t_p - b_p (t_p - 1). \]
	\end{conjecture}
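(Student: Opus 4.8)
\noindent\emph{A strategy for Conjecture~\ref{conjecture about general period}.} The plan is to split the problem along the prime factors of $M$ by the Chinese Remainder Theorem and treat one prime at a time, the hypothesis $t_p\le 2$ being precisely what keeps the carry bookkeeping manageable. Write $M=\prod_{p\mid M}p^{a_p}$ and, for each prime $p\mid M$, set $b_p(n):=\mathcal{B}_{M,N}(n)\bmod p^{a_p}$. Since $\mathbb{Z}/M\mathbb{Z}\cong\prod_{p\mid M}\mathbb{Z}/p^{a_p}\mathbb{Z}$, a positive integer $P$ is a period of $b$ if and only if it is a period of every $b_p$, so $P_{\min}(b)=\operatorname{lcm}_{p\mid M}P_{\min}(b_p)$ and it suffices to compute each $P_{\min}(b_p)$. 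Moreover $t_p\le 2$ forces $v_p(N^i)=b_p i\ge 2b_p\ge a_p$ for all $i\ge 2$, so each $N^i$ with $i\ge 2$ is $\equiv 0\pmod{p^{a_p}}$; hence $b_p(n)$, and therefore $b(n)$, depends only on $n\bmod M^2$, every sequence here is purely periodic, and the set of periods of each $b_p$ is exactly the set of positive multiples of $P_{\min}(b_p)$.

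Now fix $p\mid M$ and abbreviate $a=a_p$, $b=b_p$, $t=t_p$. If $t=1$ (so $a\le b$) then $N^i\equiv 0\pmod{p^a}$ already for $i\ge 1$, whence $b_p(n)\equiv d_0(n)\equiv n\pmod{p^a}$ and $P_{\min}(b_p)=p^a=p^{e_p}$. If $t=2$ (so $b<a\le 2b$) then $b_p(n)\equiv d_0(n)+d_1(n)N\pmod{p^a}$; substituting $d_0(n)=n-d_1(n)M-M^2\lfloor n/M^2\rfloor$ and $d_1(n)=\lfloor n/M\rfloor-M\lfloor n/M^2\rfloor$ and using $M\equiv M^2\equiv 0\pmod{p^a}$ collapses this to $b_p(n)\equiv n+N\lfloor n/M\rfloor\pmod{p^a}$. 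I would then read off the periods of $g(n):=n+N\lfloor n/M\rfloor\bmod p^a$ from the identity $g(n+P)-g(n)\equiv P+N\bigl(\lfloor(n+P)/M\rfloor-\lfloor n/M\rfloor\bigr)\pmod{p^a}$. If $M\nmid P$, the bracketed difference takes the two values $\lfloor P/M\rfloor$ and $\lfloor P/M\rfloor+1$ (attained at $n\equiv 0$ and $n\equiv M-1\pmod M$), so $g(n+P)-g(n)$ takes two values differing by $N$, which cannot both vanish mod $p^a$ since $v_p(N)=b<a$; hence $M\mid P$ is forced. If $M\mid P$, write $P=MQ$: the bracketed difference is identically $Q$, so $g(n+P)-g(n)\equiv Q(M+N)\pmod{p^a}$, and since $v_p(M)=a>b=v_p(N)$ we have $v_p(M+N)=b$, so this vanishes mod $p^a$ exactly when $v_p(Q)\ge a-b$. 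Thus the periods of $b_p$ are precisely the multiples of $Mp^{a-b}$, i.e.\ $P_{\min}(b_p)=Mp^{a_p-b_p}$.

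It remains to take the least common multiple. For each $q\mid M$, $v_q\bigl(\operatorname{lcm}_{p\mid M}P_{\min}(b_p)\bigr)=\max_{p\mid M}v_q(P_{\min}(b_p))$: the term $p=q$ contributes $e_q$ (namely $a_q$ when $t_q=1$ and $2a_q-b_q$ when $t_q=2$, by the two formulas above); a term $p\ne q$ with $t_p=2$ contributes $v_q(Mp^{a_p-b_p})=a_q$; and a term $p\ne q$ with $t_p=1$ contributes $0$. Since $e_q\ge a_q$ in every case---equality when $t_q=1$, and $e_q=2a_q-b_q>a_q$ when $t_q=2$ because then $b_q<a_q$---the maximum is $e_q$, and therefore $P_{\min}(b)=\prod_{q\mid M}q^{e_q}$, as claimed.

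The genuinely delicate step is the carry analysis in the $t_p=2$ case, and this is exactly where the hypothesis enters: the collapse to $g(n)=n+N\lfloor n/M\rfloor$ and the clean alternative ``$M\mid P$, or else two distinct carry values'' both fail for $t_p\ge 3$, where one is left with $b_p(n)\equiv n+\sum_{j=1}^{t_p-1}N^j\lfloor n/M^j\rfloor\pmod{p^{a_p}}$ and nested floor functions that carry at different scales and interfere with one another. Handling that regime is precisely what the restriction $t_p\le 2$ avoids.
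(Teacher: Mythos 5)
The statement you are proving is left as a \emph{conjecture} in the paper (supported only by numerical evidence), so there is no proof of it to compare against; the paper's nearest results are Theorem~\ref{thm:period_prime_power} (exact period for $M,N$ powers of a single prime) and Proposition~\ref{Estimation of the Minimal Period} (the bounds $M\le P_{\min}\le M^{\overline t}$). Your argument goes beyond both and, as far as I can tell, is a correct and complete proof of the conjecture. The CRT reduction is sound: a period of $b$ is exactly a common period of the $b_p$, and since $t_p\le 2$ forces $N^i\equiv 0\pmod{p^{a_p}}$ for $i\ge 2$, each $b_p$ factors through $n\bmod M^2$, is purely periodic, and its period set is the set of multiples of its minimal period. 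The collapse to $b_p(n)\equiv n+N\lfloor n/M\rfloor\pmod{p^{a_p}}$ when $t_p=2$ is the key simplification, and the two-valued carry analysis (forcing $M\mid P$, then $v_p(P/M)\ge a_p-b_p$ via $v_p(M+N)=b_p$) correctly yields $P_{\min}(b_p)=Mp^{a_p-b_p}$, whose $p$-valuation is $2a_p-b_p=e_p$; the final $\operatorname{lcm}$ computation, using $e_q\ge a_q$, is also right. I verified the formula against the paper's Theorem~\ref{thm:period_prime_power} in the prime-power case and on small composite examples such as $M=12$, $N=6$. Two small points to tighten in a write-up: justify explicitly that an \emph{ultimate} period is automatically a pure period here (immediate since $b_p$ factors through $n\bmod M^2$), and state the closure-under-$\gcd$ fact for period sets of purely periodic sequences, which underlies the $\operatorname{lcm}$ step. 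Your closing remark correctly identifies why $t_p\ge 3$ is harder: one then faces $n+\sum_{j\ge1}N^j\lfloor n/M^j\rfloor$ with interacting carries at several scales, which is exactly the regime the conjecture's hypothesis excludes.
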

	
\section{Non-periodic case: Prouhet–Tarry–Escott Connections}

	\noindent Previously, we examined the case where $\operatorname{rad}(M) \mid \operatorname{rad}(N)$, which leads to ultimately periodic sequences. We now turn to the non-periodic case where $\operatorname{rad}(M) \nmid \operatorname{rad}(N)$. The resulting sequences exhibit properties analogous to those of the classical sum-of-digits function, which we will explore in this section.

	\begin{theorem}\label{zeros of G_p(z) at z=1}
		Let $M \ge 2, N \ge 1, p \ge 0$ be integers such that $\operatorname{rad}(M) \nmid \operatorname{rad}(N)$. Let $\xi$ be a primitive $M$-th root of unity. Define the polynomial $G_p(z)$ as
		\begin{equation} \label{defi:G_p(z)}
			G_p(z) := \sum_{m=0}^{M^{p+1}-1} \xi^{\mathcal{B}_{M,N}(m)} z^m.
			 \end{equation}
		Then $z=1$ is a root of $G_p(z)$ with multiplicity at least $p+1$. That is, $G_p^{(j)}(1) = 0$ for all integers $j$ in the range $0 \le j \le p$.
	\end{theorem}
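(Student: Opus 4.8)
The plan is to exploit the multiplicative structure that the base-$M$ digit expansion forces on $G_p(z)$. Every integer $m$ with $0\le m<M^{p+1}$ corresponds bijectively to a digit vector $(d_0,\dots,d_p)\in\{0,1,\dots,M-1\}^{p+1}$ through $m=\sum_{i=0}^{p}d_iM^i$, and by the very definition of the base-shifting map $\mathcal{B}_{M,N}(m)=\sum_{i=0}^{p}d_iN^i$ exactly (high-position zero digits contribute nothing). Substituting into \eqref{defi:G_p(z)} and using the distributive law to decouple the sum over $m$ into a product over digit positions, I would obtain the factorization
\[
G_p(z)=\sum_{(d_0,\dots,d_p)}\ \prod_{i=0}^{p}\bigl(\xi^{N^i}z^{M^i}\bigr)^{d_i}=\prod_{i=0}^{p}f_i(z),\qquad f_i(z):=\sum_{d=0}^{M-1}\bigl(\xi^{N^i}z^{M^i}\bigr)^{d}.
\]
Each $f_i$ is a genuine polynomial in $z$, so it suffices to prove that all $p+1$ factors $f_0,\dots,f_p$ vanish at $z=1$; then $(z-1)^{p+1}\mid G_p(z)$, which is precisely the assertion that $z=1$ is a zero of $G_p$ of multiplicity at least $p+1$, equivalently $G_p^{(j)}(1)=0$ for $0\le j\le p$.

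For the vanishing of the factors, I would evaluate $f_i(1)=\sum_{d=0}^{M-1}\xi^{dN^i}$, a finite geometric sum with common ratio $\omega:=\xi^{N^i}$. Since $\xi$ is a primitive $M$-th root of unity, $\omega^{M}=(\xi^{M})^{N^i}=1$, so as long as $\omega\neq 1$ we get $f_i(1)=(\omega^{M}-1)/(\omega-1)=0$. The single fact to check is thus $\omega\neq1$, i.e.\ $M\nmid N^i$, for every $i$ with $0\le i\le p$ — and this is the only place the hypothesis $\operatorname{rad}(M)\nmid\operatorname{rad}(N)$ enters: it provides a prime $q$ with $q\mid M$ but $q\nmid N$, hence $q\nmid N^i$ for all $i\ge 0$, hence $M\nmid N^i$. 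Therefore $f_i(1)=0$ for each $i$, and multiplying the $p+1$ factors gives the claim.

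I do not expect any real obstacle: the argument is a bookkeeping identity followed by a root-of-unity sum. The only step needing slight care is justifying the factorization of $G_p(z)$ rigorously — confirming the bijection between residues mod $M^{p+1}$ and digit vectors, that $\mathcal{B}_{M,N}(m)=\sum_{i=0}^{p}d_iN^i$ holds on the nose in this range (including for $m$ with leading-zero digits), and that the resulting multiple sum splits as a product. Once that is in place, the vanishing of each geometric factor at $z=1$ is immediate and counting factors yields the stated multiplicity. As a sanity check, the case $p=0$ reduces to the familiar identity $\sum_{m=0}^{M-1}\xi^{m}=0$.
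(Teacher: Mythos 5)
Your proof is correct, and it takes a genuinely different route from the paper's. The paper proves the equivalent statement that the weighted power sums $S_k=\sum_{m=0}^{M^{p+1}-1}m^k\xi^{\mathcal{B}_{M,N}(m)}$ vanish for $0\le k\le p$: it expands $m^k$ by the multinomial theorem over the digits, observes that a tuple of $p+1$ exponents summing to $k\le p$ must contain a zero exponent, and kills the corresponding factor with the geometric sum $\sum_{d=0}^{M-1}\xi^{dN^{i_0}}=0$. You instead factor $G_p(z)=\prod_{i=0}^{p}f_i(z)$ with $f_i(z)=\sum_{d=0}^{M-1}(\xi^{N^i}z^{M^i})^d$ and show each of the $p+1$ polynomial factors vanishes at $z=1$, so $(z-1)^{p+1}\mid G_p(z)$. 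Both arguments ultimately rest on the same fact ($\xi^{N^i}\ne 1$, forced by a prime $q\mid M$ with $q\nmid N$, hence the digit-wise geometric sum is zero), but yours is shorter and avoids the multinomial bookkeeping; notably, the same product factorization of $G_p(z)$ is derived later in the paper (in the proof of the moment formulas for $\beta_k^{(p)}$), so your argument fits naturally with the rest of the development. The one thing the paper's power-sum formulation buys is that the vanishing of the $S_k$ (and of the twisted sums $S_k(l)$) is exactly what is reused for the Prouhet--Tarry--Escott application, whereas from your factorization one would re-derive that vanishing from the multiplicity statement; for the theorem as stated, however, your proof is complete.
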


	\begin{proof}
		The $j$-th derivative of $G_p(z)$ at $z=1$ is a sum weighted by the falling factorial polynomial $m(m-1)\cdots(m-j+1)$. Since the standard monomials $\{m^k\}_{k=0}^j$ form a basis for the space of polynomials of degree at most $j$, showing that $G_p^{(j)}(1)=0$ for $0 \le j \le p$ is equivalent to showing that certain weighted power sums vanish. Specifically, we will prove that for any integer $k$ such that $0 \le k \le p$, the following sum is zero:
		\[ S_k := \sum_{m=0}^{M^{p+1}-1} m^k \xi^{\mathcal{B}_{M,N}(m)} = 0. \]
		The summation range consists of integers $m$ with at most $p+1$ digits in base $M$. We can express each such $m$ as $m = \sum_{i=0}^p d_i M^i$, where $d_i \in \{0, \dots, M-1\}$. Consequently, $\mathcal{B}_{M,N}(m) = \sum_{i=0}^p d_i N^i$. We can rewrite the sum $S_k$ by summing over all possible digit vectors $(d_0, \dots, d_p)$:
		\[ S_k = \sum_{d_0=0}^{M-1} \cdots \sum_{d_p=0}^{M-1} \left(\sum_{i=0}^p d_i M^i\right)^k \xi^{\sum_{i=0}^p d_i N^i}. \]
		Using the multinomial theorem to expand $(\sum d_i M^i)^k$ and rearranging the order of summation gives
		\begin{equation}
			\begin{split}
				S_k &= \sum_{d_0, \dots, d_p} \left( \sum_{k_0+\dots+k_p=k} \frac{k!}{k_0! \cdots k_p!} \prod_{i=0}^p (d_i M^i)^{k_i} \right) \left( \prod_{i=0}^p \xi^{d_i N^i} \right) \\
				&= \sum_{k_0+\dots+k_p=k} \frac{k!}{k_0! \cdots k_p!} \left(\prod_{i=0}^p M^{ik_i}\right) \left( \sum_{d_0, \dots, d_p} \prod_{i=0}^p d_i^{k_i} \xi^{d_i N^i} \right).
			\end{split}
		\end{equation}
		The innermost sum over the digits separates into a product of sums over each digit:
		\begin{equation}
			 \sum_{d_0, \dots, d_p} \prod_{i=0}^p d_i^{k_i} \xi^{d_i N^i} = \prod_{i=0}^p \left( \sum_{d=0}^{M-1} d^{k_i} (\xi^{N^i})^{d} \right).
			\end{equation}
		Consider any term in the multinomial expansion, which corresponds to a tuple of non-negative integers $(k_0, \dots, k_p)$ such that $\sum_{i=0}^p k_i = k$. Since there are $p+1$ such integers and their sum $k$ is at most $p$, at least one of these integers must be zero. Let $i_0$ be an index for which $k_{i_0}=0$. The corresponding factor in the product of sums becomes
		\[ \sum_{d=0}^{M-1} d^{0} (\xi^{N^{i_0}})^{d} = \sum_{d=0}^{M-1} (\xi^{N^{i_0}})^{d}. \]
		This is a finite geometric series. The condition $\operatorname{rad}(M) \nmid \operatorname{rad}(N)$ ensures there is a prime $q$ dividing $M$ but not $N$. Since $\xi$ is a primitive $M$-th root of unity, $\xi^x=1$ if and only if $M \mid x$. As $q \mid M$ but $q \nmid N^{i_0}$, we know $M$ cannot divide $N^{i_0}$, so $\xi^{N^{i_0}} \ne 1$. The sum of the geometric series is therefore
		\[ \frac{(\xi^{N^{i_0}})^M - 1}{\xi^{N^{i_0}} - 1} = \frac{(\xi^M)^{N^{i_0}} - 1}{\xi^{N^{i_0}} - 1} = \frac{1^{N^{i_0}} - 1}{\xi^{N^{i_0}} - 1} = 0. \]
		Since every term in the multinomial expansion of $S_k$ contains at least one such zero factor, the entire sum is zero. Thus, $S_k = 0$ for all $0 \le k \le p$. As this implies $G_p^{(j)}(1)=0$ for $0 \le j \le p$, the proof is complete.
	\end{proof}
	The Prouhet-Tarry-Escott (PTE) problem \cite{bolker-2016-PTE-problem,raghavendran-2019-review-PTE-problem,nguyen-2016-proof-PTE-problem} is a classic problem in additive number theory that seeks two (or several) distinct multisets of integers, each with $n$ elements, such that the sum of their $k$-th powers are equal for $k=1, \ldots, d$. The following theorem provides solutions to some specific type of PTE problems.
	\begin{theorem}
		Let $M \ge 2$ and $p \ge 1$ be integers. For an integer $N$ in the range $\{1, 2, \dots, M-1\}$, consider the partition of the set $I=\{0,1,\dots , M^{p+1}-1\}$ into $M$ subsets based on the base-shifting map:
		$$
		T_j(N) := \bigl\{\,m\in I\big|  \mathcal{B}_{M,N}(m)\equiv j\pmod{M}\bigr\}, \qquad j=0,1,\dots ,M-1.
		$$
		This construction yields a solution to the Prouhet-Tarry-Escott problem of degree $p$—meaning the sum of the $k$-th powers of the elements is the same across all sets $T_j$ for $k=0, 1, \dots, p$—if and only if $\gcd(M,N)=1$.
		
		Furthermore, distinct values of $N$ satisfying this condition produce distinct partitions. Consequently, the total number of unique solutions generated by this method is $\phi(M)$, the value of Euler's totient function, which counts the integers $N \in \{1, \dots, M-1\}$ for which $\gcd(M,N)=1$.
	\end{theorem}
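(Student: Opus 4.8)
\medskip
\noindent\emph{Proof strategy.} The plan is to recast the Prouhet--Tarry--Escott requirement as the vanishing of a family of character sums, to which an adaptation of Theorem~\ref{zeros of G_p(z) at z=1} applies. Fix a primitive $M$-th root of unity $\xi$, write $I=\{0,\dots,M^{p+1}-1\}$, and for $0\le r\le M-1$ and $k\ge 0$ put
\[
S_k^{[r]}:=\sum_{m=0}^{M^{p+1}-1} m^{k}\,\xi^{\,r\,\mathcal{B}_{M,N}(m)}.
\]
From $\mathbf{1}[\mathcal{B}_{M,N}(m)\equiv j\ (\mathrm{mod}\ M)]=\frac1M\sum_{r=0}^{M-1}\xi^{\,r(\mathcal{B}_{M,N}(m)-j)}$ one gets $\sum_{m\in T_j(N)}m^{k}=\frac1M\sum_{r=0}^{M-1}\xi^{-rj}S_k^{[r]}$. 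The $r=0$ term equals $\frac1M\sum_{m\in I}m^{k}$ and is independent of $j$, so the vector $\bigl(\sum_{m\in T_j(N)}m^{k}\bigr)_{0\le j\le M-1}$ is constant in $j$ exactly when its discrete Fourier coefficients at the nonzero frequencies vanish. Hence $\{T_j(N)\}_j$ is a PTE solution of degree $p$ if and only if $S_k^{[r]}=0$ for every $0\le k\le p$ and every $1\le r\le M-1$.

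\medskip
\noindent\emph{Sufficiency.} Assume $\gcd(M,N)=1$ (which, as $M\ge2$, already gives $\operatorname{rad}(M)\nmid\operatorname{rad}(N)$). I would rerun the computation in the proof of Theorem~\ref{zeros of G_p(z) at z=1} verbatim, with $\xi$ replaced by $\xi^{r}$: expanding $m=\sum_{i=0}^{p}d_iM^{i}$, using the multinomial theorem, and factoring the digit sum yields
\[
S_k^{[r]}=\sum_{k_0+\dots+k_p=k}\frac{k!}{k_0!\cdots k_p!}\Bigl(\prod_{i=0}^{p}M^{ik_i}\Bigr)\prod_{i=0}^{p}\Bigl(\sum_{d=0}^{M-1}d^{k_i}\bigl(\xi^{\,rN^{i}}\bigr)^{d}\Bigr).
\]
When $k\le p$, some exponent $k_{i_0}$ equals $0$, producing the factor $\sum_{d=0}^{M-1}(\xi^{\,rN^{i_0}})^{d}$, a geometric series that vanishes as soon as $\xi^{\,rN^{i_0}}\ne1$, i.e.\ $M\nmid rN^{i_0}$. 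Since $\gcd(M,N)=1$ implies $\gcd(M,N^{i_0})=1$, we see $M\mid rN^{i_0}$ would force $M\mid r$, impossible for $1\le r\le M-1$. Hence every summand of $S_k^{[r]}$ has a zero factor, so $S_k^{[r]}=0$ and the PTE property follows from the reformulation above.

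\medskip
\noindent\emph{Necessity.} I argue contrapositively: suppose $g:=\gcd(M,N)\ge2$ and I exhibit a nonzero $S_k^{[r]}$ with $k\le p$. Take $r_0:=M/g\in\{1,\dots,M-1\}$ and $\zeta:=\xi^{r_0}$, so $\zeta^{M}=1$ and $\zeta\ne1$. In the factored form of $S_1^{[r_0]}$ the only monomial is $m^{1}$, so each summand assigns the exponent $1$ to one index $i$ while the remaining factors are $\sum_{d=0}^{M-1}(\xi^{\,r_0N^{j}})^{d}$ with $j\ne i$. For $j\ge1$ one has $\xi^{\,r_0N^{j}}=1$, since $g\mid N$ gives $g\mid N^{j}$ and hence $M\mid (M/g)N^{j}$; but for $j=0$ the factor is $\sum_{d=0}^{M-1}\zeta^{d}=0$ as $\zeta\ne1$. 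So only $i=0$ survives, and
\[
S_1^{[r_0]}=\Bigl(\sum_{d=0}^{M-1}d\,\zeta^{d}\Bigr)M^{p}=\frac{M}{\zeta-1}\,M^{p}=\frac{M^{p+1}}{\zeta-1}\ne0,
\]
where $\sum_{d=0}^{M-1}d\,\zeta^{d}=M/(\zeta-1)$ follows from differentiating $\sum_{d=0}^{M-1}x^{d}=(x^{M}-1)/(x-1)$ and using $\zeta^{M}=1$. As $p\ge1$, the index $k=1$ is in the PTE range, so $\{T_j(N)\}_j$ is not a solution of degree $p$. Isolating the surviving term and carrying out this cyclotomic evaluation is the main computational obstacle; the rest is bookkeeping around the already-established Theorem~\ref{zeros of G_p(z) at z=1}.

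\medskip
\noindent\emph{Distinctness and the count.} For distinct admissible $N_1\ne N_2$ in $\{1,\dots,M-1\}$, note that $M\in I$ (using $p\ge1$) has base-$M$ digit string $10$, so $\mathcal{B}_{M,N}(M)=N$ for every $N$, whereas the integer $N_1\in I$ has a single base-$M$ digit and $\mathcal{B}_{M,N}(N_1)=N_1$. Hence $M$ and $N_1$ lie in the same block of the partition determined by $N_1$ (both images are $N_1\bmod M$) but in different blocks of the partition determined by $N_2$ (images $N_2$ and $N_1$, incongruent mod $M$ since $0<N_1,N_2<M$). Therefore $\{T_j(N_1)\}_j\ne\{T_j(N_2)\}_j$ as set partitions, so $N\mapsto\{T_j(N)\}_j$ is injective on the admissible set. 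Finally $\#\{N:1\le N\le M-1,\ \gcd(M,N)=1\}=\phi(M)$, since $\gcd(M,M)=M\ne1$; this gives the stated count.
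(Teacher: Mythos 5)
Your proof is correct and follows essentially the same route as the paper: the discrete-Fourier reformulation of the PTE condition in terms of the character sums $S_k^{[r]}=\sum_m m^k\xi^{r\mathcal{B}_{M,N}(m)}$, the digit-wise multinomial factorization (as in Theorem~\ref{zeros of G_p(z) at z=1}) for sufficiency, an explicit nonvanishing evaluation of $S_1$ at $\zeta=\xi^{M/\gcd(M,N)}$ for necessity, and a concrete witness for distinctness. If anything, your distinctness argument (showing $m=M$ and $m=N_1$ lie in the same block under $N_1$ but different blocks under $N_2$) is cleaner and more complete than the paper's, which tests only $m=1$; your evaluation $\sum_{d=0}^{M-1}d\,\zeta^{d}=M/(\zeta-1)$ is also the correct form of the constant the paper records with a sign slip, though in both cases only its nonvanishing matters.
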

	
	\begin{proof}
		The proof is established in three parts. First, we show the condition on $N$ is sufficient for the partition to be a PTE solution. Second, we show it is necessary. Finally, we demonstrate that distinct choices of $N$ yield unique partitions.
		
		Let $\xi = e^{2\pi i/M}$ be a primitive $M$-th root of unity.
		First, we prove sufficiency. Assume $\gcd(M,N)=1$.  For $k=0,\dots ,p$ and $l=1,\dots ,M-1$ set
		\[
		S_k(l):=\sum_{m=0}^{M^{p+1}-1} m^{\,k}\,(\xi^{\,l})^{\mathcal{B}_{M,N}(m)} .
		\]
		As in Theorem 3.1 we expand $m$ digit-wise; every term of the resulting multinomial contains a factor
		\[
		\sum_{d=0}^{M-1} (\xi^{\,lN^{\,i}})^d
		=\frac{(\xi^{\,lN^{\,i}})^M-1}{\xi^{\,lN^{\,i}}-1},
		\]
		for some index $i\in\{0,\dots ,p\}$.  Because $\gcd(M,N)=1$, $N^{\,i}$ is coprime to $M$,
		hence $\xi^{\,lN^{\,i}}\neq1$; the numerator equals $1-1=0$, so the factor
		vanishes.  Consequently $S_k(l)=0$ for all stated $k$ and $l$.
		Define $C_{j,k}:=\sum_{m\in T_j(N)} m^{\,k}$.  Grouping the summands gives
		\[
		S_k(l)=\sum_{j=0}^{M-1}C_{j,k}(\xi^{\,l})^{j}=0\quad(l=1,\dots ,M-1).
		\]
		Let $P(z):=\sum_{j=0}^{M-1}C_{j,k}z^{\,j}$.  The previous equation says
		$P(\xi^{\,l})=0$ for $l=1,\dots ,M-1$; thus the $M-1$ distinct numbers
		$\xi,\xi^{\,2},\dots ,\xi^{\,M-1}$ are roots of the degree-$\le M-1$ polynomial $P$.
		Hence
		\[
		P(z)=K\,(1+z+\dots +z^{\,M-1})
		\]
		for some constant $K$, so $C_{0,k}=C_{1,k}=\dots =C_{M-1,k}=K$.
		Using also $S_k(0)=\sum_{j}C_{j,k}=C_k^{\text{tot}}$ we get
		$K=C_k^{\text{tot}}/M$; therefore all $T_j(N)$ have identical power sums
		up to degree $p$, i.e. they solve the Prouhet–Tarry–Escott problem.
		
		Next we prove necessity:Suppose, to the contrary, that \(\gcd(M,N)=d>1\).
		Pick a prime \(q\mid d\).  Write
		\[
		l:=\frac{M}{q}\quad(1\le l\le M-1),\qquad
		\xi:=e^{2\pi i/M},\qquad
		\eta:=\xi^{\,l}=e^{2\pi i\,l/M}.
		\]
		Because \(q\mid N\) we have \(N= qN_0\) for some integer \(N_0\); hence
		\begin{equation}\label{power of eta equals one}
			\eta^{\,N}=\xi^{\,lN}= \xi^{\,M N_0}=1,\qquad
			\eta^{\,N^{\,i}}=1\quad(i\ge 1).
		\end{equation}
		
		Consider \(S_{1}(l)=\displaystyle\sum_{m=0}^{M^{p+1}-1} m\,\eta^{\mathcal{B}_{M,N}(m)}\).
		Split each integer \(m\) into its least base-\(M\) digit and the rest:
		write \(m=r+M\,t\) with
		
		\[
		0\le r<M,\quad 0\le t<M^{p}-1.
		\]
		
		Because of (\ref{power of eta equals one}) we have
		
		\[
		\eta^{\mathcal{B}_{M,N}(m)}
		 = \eta^{\,\mathcal{B}_{M,N}(r)}\,
		\eta^{\,N\,\mathcal{B}_{M,N}(t)} = \eta^{\,\mathcal{B}_{M,N}(r)},
		\]
		since \(N\,\mathcal{B}_{M,N}(t)\) is a multiple of \(N\) and hence of \(q\).
		Therefore
		
		\begin{equation}\label{expansion of S(1,l)}
			\begin{aligned}
				S_{1}(l)
				&= \sum_{t=0}^{M^{p}-1}\sum_{r=0}^{M-1} (Mt+r)\,\eta^{\,\mathcal{B}_{M,N}(r)}\\[4pt]
				&= M\Bigl(\sum_{t=0}^{M^{p}-1} t\Bigr)\Bigl(\sum_{r=0}^{M-1}\eta^{\,\mathcal {B}_{M,N}(r)}\Bigr)
				 + 
				M^{p}\sum_{r=0}^{M-1} r\,\eta^{\,\mathcal{B}_{M,N}(r)}.      
			\end{aligned}
		\end{equation}
		
		Now \(\mathcal{B}_{M,N}(r)=r\) for \(0\le r<M\), so the inner geometric sums simplify:
		
		\[
		\sum_{r=0}^{M-1}\eta^{\,r}=\frac{\eta^{M}-1}{\eta-1}=0
		\quad(\eta\neq1),
		\]
		while
		
		\[
		\sum_{r=0}^{M-1} r\,\eta^{\,r}
		=\eta\frac{d}{d\eta}\Bigl(\frac{\eta^{M}-1}{\eta-1}\Bigr)
		=\frac{M}{\eta^{-1}-1}\neq0.
		\]
		
		Equation (\ref{expansion of S(1,l)}) therefore yields
		
		\begin{equation}\label{S(1,l) is not zero}
			S_{1}(l)=M^{p}\,\frac{M}{\eta^{-1}-1} \neq 0.  
		\end{equation}
		
		On the other hand, if the blocks \(T_j(N)\) formed a PTE solution of degree
		\(p\ge1\) we would have
		
		\[
		C_{0,1}=C_{1,1}=\dots=C_{M-1,1}=:C_{1}.
		\]
		
		Consequently
		
		\[
		S_{1}(l)=\sum_{j=0}^{M-1}C_{j,1}\,\xi^{\,lj}
		=C_{1}\sum_{j=0}^{M-1}\xi^{\,lj}=C_{1}\cdot0=0,
		\]
		
		contradicting (\ref{S(1,l) is not zero}).  Hence the assumed PTE property is impossible when
		\(\gcd(M,N)>1\).
		Necessity is proved.
		
		Uniqueness of the partitions for coprime \(N\):
		
		Assume \(\gcd(M,N_1)=\gcd(M,N_2)=1\) and \(N_1\not\equiv N_2\pmod M\).
		Take \(m=1\).  Then
		
		\[
		\mathcal B_{M,N_i}(1)=1\quad(i=1,2).
		\]
		
		Therefore
		
		\[
		1\in T_{1\bmod M}(N_i).
		\]
		
		Because \(N_1\not\equiv N_2\pmod M\), these residues are different,
		so the sets \(T_j(N_1)\) and \(T_j(N_2)\) cannot coincide as partitions
		of \(\{0,\dots ,M^{p+1}-1\}\).
		Hence distinct coprime residues \(N\) yield distinct block systems.
		Since exactly \(\varphi(M)\) numbers in \(\{1,\dots ,M-1\}\) are coprime
		to \(M\), the construction produces precisely \(\varphi(M)\) different
		PTE solutions.
	\end{proof}

	\begin{definition}
		Let $M \ge 2, N \ge 1, p \ge 0$ be integers such that $\operatorname{rad}(M) \nmid \operatorname{rad}(N)$, and let $\xi$ be a primitive $M$-th root of unity. We define a sequence $(\beta_k^{(p)})$ via its generating function $F_p(z)$:
		\begin{equation}\label{defi:F_p(z)}
			 F_p(z) := \frac{G_p(z)}{(1-z)^{p+1}} = \sum_{k \ge 0}\beta_k^{(p)}z^k,
			  \end{equation}
			  
		where $G_p(z)$ is the polynomial $\sum_{m=0}^{M^{p+1}-1} \xi^{\mathcal{B}_{M,N}(m)} z^m$ from Theorem \ref{zeros of G_p(z) at z=1}.
	\end{definition}
	
	\begin{remark}
		In Theorem \ref{zeros of G_p(z) at z=1}, we proved that $G_p(z)$ has a root of multiplicity at least $p+1$ at $z=1$. This ensures that the factor $(1-z)^{p+1}$ in the numerator and denominator cancels, confirming that $F_p(z)$ is indeed a polynomial. Its degree is at most $M^{p+1}-1 - (p+1) = M^{p+1}-p-2$, so the sequence $(\beta_k^{(p)})$ has finite support. This makes the definition sound.
	\end{remark}
	
	The following theorem provides two fundamental identities that relate the sequences at different levels of $p$.
	
	\begin{theorem}\label{thm:convolution_identities}
		Let the sequence $(\beta_k^{(p)})$ be defined as above. The first identity expresses $\beta_n^{(p+1)}$ as a sum involving earlier terms of the original sequence: for $p \ge 0$ and $n \ge 0$,
		\begin{equation}\label{first_convolution_identities}
			 \beta_n^{(p+1)} = \sum_{k=0}^{n} \xi^{\mathcal{B}_{M,N}(k)} \binom{n-k+p+1}{p+1}, 
			\end{equation}
			
		where the sum implicitly runs over $k < M^{p+2}$. 
		The second identity provides an inversion formula, expressing the original sequence in terms of $(\beta_k^{(p)})$: for $p \ge 0$ and $0 \le n < M^{p+1}$,
		\begin{equation}\label{second_convolution_identities}
			 \xi^{\mathcal{B}_{M,N}(n)} = \sum_{k=0}^{n} (-1)^k \binom{p+1}{k} \beta_{n-k}^{(p)}. \end{equation}
	\end{theorem}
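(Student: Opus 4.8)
The plan is to obtain both identities directly from the defining relation of the generating functions, namely $F_p(z)\,(1-z)^{p+1}=G_p(z)$ from \eqref{defi:F_p(z)}, simply by reading off the coefficient of $z^n$. Two elementary power‑series expansions do all the work: the finite binomial expansion
\[
(1-z)^{p+1}=\sum_{k=0}^{p+1}(-1)^k\binom{p+1}{k}z^k,
\]
and the negative‑binomial series
\[
\frac{1}{(1-z)^{p+2}}=\sum_{j\ge 0}\binom{j+p+1}{p+1}z^j .
\]
Because, by the Remark following the definition of $F_p$, each $F_p(z)$ is an actual polynomial, every manipulation below is a manipulation of polynomials, so no convergence question arises.

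For the first identity I would start from $F_{p+1}(z)=G_{p+1}(z)/(1-z)^{p+2}$ and substitute the explicit form $G_{p+1}(z)=\sum_{m=0}^{M^{p+2}-1}\xi^{\mathcal{B}_{M,N}(m)}z^m$ together with the negative‑binomial series above. Forming the Cauchy product and extracting the coefficient of $z^n$ gives
\[
\beta_n^{(p+1)}=[z^n]F_{p+1}(z)=\sum_{k}\xi^{\mathcal{B}_{M,N}(k)}\binom{n-k+p+1}{p+1},
\]
where $k$ runs over $0\le k\le n$ subject to $k\le M^{p+2}-1$; reading $\binom{n-k+p+1}{p+1}$ as $0$ when $n-k<0$ lets the sum be written as $\sum_{k=0}^{n}$, and the ``implicit'' restriction $k<M^{p+2}$ in the statement is precisely the support of the coefficients of $G_{p+1}$. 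This is \eqref{first_convolution_identities}.

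For the second identity I would instead use $G_p(z)=(1-z)^{p+1}F_p(z)$ and take the coefficient of $z^n$ via the finite binomial expansion of $(1-z)^{p+1}$:
\[
[z^n]G_p(z)=\sum_{k=0}^{p+1}(-1)^k\binom{p+1}{k}\,\beta_{n-k}^{(p)} .
\]
For $0\le n<M^{p+1}$ the left‑hand side equals $\xi^{\mathcal{B}_{M,N}(n)}$ by the definition \eqref{defi:G_p(z)} of $G_p$. Since $\binom{p+1}{k}=0$ for $k>p+1$ and $\beta_{n-k}^{(p)}$ contributes only when $n-k\ge 0$, the upper summation limit $p+1$ may be replaced by $n$ without changing the value, which yields \eqref{second_convolution_identities}.

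The argument is essentially formal, so the only point needing genuine care — and the step I would treat as the main (if minor) obstacle — is the bookkeeping of summation ranges: matching the truncated support $\{0,\dots,M^{p+2}-1\}$ of $G_{p+1}$ with the caveat ``$k<M^{p+2}$'' in \eqref{first_convolution_identities}, and justifying the harmless extension of the binomial‑convolution range from $p+1$ to $n$ in \eqref{second_convolution_identities}. A sanity check I would include is the substitution $j=n-k$, confirming that the coefficient $\binom{j+p+1}{p+1}$ coming from the negative‑binomial series indeed matches the $\binom{n-k+p+1}{p+1}$ appearing in \eqref{first_convolution_identities}.
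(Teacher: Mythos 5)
Your proposal is correct and follows essentially the same route as the paper: both identities are read off as coefficient extractions from $F_{p+1}(z)=G_{p+1}(z)(1-z)^{-(p+2)}$ and $G_p(z)=(1-z)^{p+1}F_p(z)$ via the negative-binomial series and the finite binomial expansion, respectively. Your extra care with the summation-range bookkeeping is a welcome addition but does not change the argument.
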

	
	\begin{proof}
		Both results follow directly from analyzing the coefficients of the defining relation $G_p(z) = (1-z)^{p+1} F_p(z)$.
		
		To prove the first identity, we find the coefficient of $z^n$ in the expression $F_{p+1}(z) = G_{p+1}(z) \cdot (1-z)^{-(p+2)}$. This coefficient, $\beta_n^{(p+1)}$, is given by the Cauchy product of the coefficients of the two series involved. The first series is $G_{p+1}(z) = \sum_{k=0}^{M^{p+2}-1} \xi^{\mathcal{B}_{M,N}(k)} z^k$. The second is the generalized binomial series $(1-z)^{-(p+2)} = \sum_{j=0}^{\infty} \binom{j+p+1}{p+1} z^j$. The convolution formula for the coefficient of $z^n$ is
		\[ \beta_n^{(p+1)} = \sum_{k=0}^{n} \left( \text{coeff of } z^k \text{ in } G_{p+1}(z) \right) \cdot \left( \text{coeff of } z^{n-k} \text{ in } (1-z)^{-(p+2)} \right), \]
		which directly yields the stated formula $\beta_n^{(p+1)} = \sum_{k=0}^{n} \xi^{\mathcal{B}_{M,N}(k)} \binom{(n-k)+p+1}{p+1}$.
		
		To prove the second identity, we find the coefficient of $z^n$ on both sides of the relation $G_p(z) = F_p(z) \cdot (1-z)^{p+1}$ for $n$ in the range $0 \le n < M^{p+1}$. On the left side, this coefficient is simply $\xi^{\mathcal{B}_{M,N}(n)}$. On the right, we have the product of the series $F_p(z) = \sum_{j=0}^{\infty} \beta_j^{(p)} z^j$ and the finite polynomial $(1-z)^{p+1} = \sum_{k=0}^{p+1} (-1)^k \binom{p+1}{k} z^k$. The coefficient of $z^n$ in their product is again given by a convolution:
		\[ \xi^{\mathcal{B}_{M,N}(n)} = \sum_{k=0}^{n} \beta_{n-k}^{(p)} \cdot \left( (-1)^k \binom{p+1}{k} \right). \]
		The summation index $k$ runs up to $n$, but the term $\binom{p+1}{k}$ is zero for $k > p+1$, so the sum is finite and well-defined. Rearranging the terms in the summand gives the desired result.
	\end{proof}

	The sum and first moment of the coefficients of the sequence $(\beta_k^{(p)})$ can be calculated in closed form.
	
	\begin{proposition}\label{thm:beta_moments}
		Let $M \ge 2, N \ge 1, p \ge 0$ be integers such that $\operatorname{rad}(M) \nmid \operatorname{rad}(N)$, and let $\xi$ be a primitive $M$-th root of unity. The sequence $(\beta_k^{(p)})$ has finite support. Its sum and first moment are given by:
		\begin{enumerate}
			\item[(i)] Sum of coefficients (0-th moment):
			\begin{equation}\label{eqn:0-th_moment}
			 \sum_{k \ge 0} \beta_k^{(p)} = M^{\frac{(p+1)(p+2)}{2}} \prod_{l=0}^{p} \frac{1}{1-\xi^{N^l}}. 
			 \end{equation}
			\item[(ii)] First moment:
			\begin{equation}\label{eqn:1-th_moment}
			 \sum_{k \ge 0} k\,\beta_k^{(p)} = \left(\sum_{k \ge 0} \beta_k^{(p)}\right) \cdot \left( \sum_{l=0}^{p} \left( \frac{M^{l+1}-1}{2} + \frac{M^l \xi^{N^l}}{1-\xi^{N^l}} \right) \right). 
			 \end{equation}
		\end{enumerate}
	\end{proposition}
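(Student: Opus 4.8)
By the Remark preceding the statement, $(\beta_k^{(p)})$ has finite support, so $F_p$ is a polynomial and hence $\sum_{k\ge 0}\beta_k^{(p)}=F_p(1)$ and $\sum_{k\ge 0}k\,\beta_k^{(p)}=F_p'(1)$. The plan is to compute $F_p(1)$ and $F_p'(1)$ in closed form, and the decisive step is a product factorization of $G_p(z)$. Exactly as in the proof of Theorem~\ref{zeros of G_p(z) at z=1}, write $m=\sum_{i=0}^{p}d_i M^i$ with $d_i\in\{0,\dots,M-1\}$, so that $\mathcal{B}_{M,N}(m)=\sum_{i=0}^{p}d_i N^i$; the sum over $m$ then factors over digit positions and each factor is a finite geometric series, which (using $\xi^M=1$) gives
\[
G_p(z)=\prod_{i=0}^{p}\Bigl(\sum_{d=0}^{M-1}\bigl(\xi^{N^i}z^{M^i}\bigr)^{d}\Bigr)=\prod_{i=0}^{p}\frac{z^{M^{i+1}}-1}{\xi^{N^i}z^{M^i}-1}.
\]

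\textbf{The $0$-th moment.} I would then cancel $(1-z)^{p+1}$ against the numerators one factor at a time: from $z^{M^{i+1}}-1=-(1-z)(1+z+\dots+z^{M^{i+1}-1})$ and $\xi^{N^i}z^{M^i}-1=-(1-\xi^{N^i}z^{M^i})$ the two minus signs cancel, yielding
\[
F_p(z)=\prod_{i=0}^{p}\frac{1+z+\dots+z^{M^{i+1}-1}}{1-\xi^{N^i}z^{M^i}},
\]
a product of rational functions each of which is regular at $z=1$: indeed $\operatorname{rad}(M)\nmid\operatorname{rad}(N)$ yields a prime $q\mid M$ with $q\nmid N$, so $M\nmid N^i$ and $\xi^{N^i}\ne 1$ for $i\ge 1$, while $\xi^{N^0}=\xi\ne 1$ since $M\ge 2$. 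Substituting $z=1$, the $i$-th numerator becomes $M^{i+1}$ and the $i$-th denominator becomes $1-\xi^{N^i}$; since $\prod_{i=0}^{p}M^{i+1}=M^{(p+1)(p+2)/2}$, this is identity~(i).

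\textbf{The first moment.} For (ii) I would logarithmically differentiate the product, so that $F_p'(1)=F_p(1)\sum_{i=0}^{p}\bigl(\tfrac{d}{dz}\log h_i\bigr)(1)$ with $h_i$ the $i$-th factor (this is legitimate because $F_p(1)\ne 0$ and each $h_i$ is regular and nonzero at $z=1$). Differentiating $\log h_i$ gives $\dfrac{\sum_k k z^{k-1}}{\sum_k z^k}+\dfrac{\xi^{N^i}M^i z^{M^i-1}}{1-\xi^{N^i}z^{M^i}}$ (with $k$ running over $0,\dots,M^{i+1}-1$); at $z=1$, the identity $\sum_{k=0}^{M^{i+1}-1}k=\tfrac12(M^{i+1}-1)M^{i+1}$ collapses the first term to $\tfrac12(M^{i+1}-1)$ and the second to $\dfrac{M^i\xi^{N^i}}{1-\xi^{N^i}}$; summing over $i$ and multiplying back by $F_p(1)$ gives exactly~(ii). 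The only genuinely delicate points are the sign bookkeeping when passing from $G_p$ to $F_p$ and the check that the denominators do not vanish at $z=1$; beyond that the argument is a routine geometric-series differentiation, so I anticipate no serious obstacle.
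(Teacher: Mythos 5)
Your proposal is correct and follows essentially the same route as the paper: the digit-wise factorization $G_p(z)=\prod_{i=0}^{p}\frac{1-z^{M^{i+1}}}{1-\xi^{N^i}z^{M^i}}$, cancellation of $(1-z)^{p+1}$ against the numerators to evaluate $F_p(1)$, and logarithmic differentiation for $F_p'(1)$. The only cosmetic difference is that you expand each numerator as the explicit polynomial $1+z+\dots+z^{M^{i+1}-1}$ and differentiate directly, where the paper keeps the rational form and invokes the limit $\lim_{z\to1}\bigl(\tfrac{1}{1-z}-\tfrac{Kz^{K-1}}{1-z^K}\bigr)=\tfrac{K-1}{2}$; both yield the same constant $\tfrac{M^{i+1}-1}{2}$.
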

	
	\begin{proof}
		The proof relies on the product decomposition of the generating function $G_p(z)$. Because the summation for $m$ in $G_p(z)$ covers all integers with at most $p+1$ base-$M$ digits, the sum can be factored over the digits:
		\[ G_p(z) = \sum_{m=0}^{M^{p+1}-1} \xi^{\mathcal{B}_{M,N}(m)} z^m = \prod_{l=0}^{p} \left( \sum_{d=0}^{M-1} (\xi^{N^l} z^{M^l})^d \right) = \prod_{l=0}^{p} \frac{1 - z^{M^{l+1}}}{1 - \xi^{N^l} z^{M^l}}. \]
		The condition $\operatorname{rad}(M) \nmid \operatorname{rad}(N)$ ensures that $\xi^{N^l} \ne 1$ for any $l \ge 0$, so the denominators are non-zero. The generating function for $\beta_k^{(p)}$ is $F_p(z) = G_p(z) / (1-z)^{p+1}$, which can be rewritten as:
		\[ F_p(z) = \left( \prod_{l=0}^{p} \frac{1 - z^{M^{l+1}}}{1-z} \right) \left( \prod_{l=0}^{p} \frac{1}{1 - \xi^{N^l} z^{M^l}} \right). \]
		
		First, we compute the sum of the coefficients, $\sum \beta_k^{(p)}$, which is given by $F_p(1) = \lim_{z \to 1} F_p(z)$. We evaluate the limit of each of the two product terms in the expression for $F_p(z)$. For the first product, using the standard limit $\lim_{z \to 1} (1-z^K)/(1-z) = K$, we get
		\[ \lim_{z \to 1} \prod_{l=0}^{p} \frac{1 - z^{M^{l+1}}}{1-z} = \prod_{l=0}^{p} M^{l+1} = M^{\sum_{j=1}^{p+1} j} = M^{\frac{(p+1)(p+2)}{2}}. \]
		For the second product, we can substitute $z=1$ directly, as the denominator is non-zero. Multiplying these two results gives the formula for the 0-th moment.
		
		Next, we compute the first moment, $\sum k \beta_k^{(p)}$, which is given by the derivative $F_p'(1)$. We use the logarithmic derivative technique, where $F_p'(1) = F_p(1) \cdot \lim_{z \to 1} (\ln F_p(z))'$. Taking the logarithm of $F_p(z)$ gives
		\[ \ln F_p(z) = \sum_{l=0}^{p} \left( \ln(1 - z^{M^{l+1}}) - \ln(1-z) \right) - \sum_{l=0}^{p} \ln(1 - \xi^{N^l} z^{M^l}). \]
		Differentiating with respect to $z$ and rearranging terms yields
		\[ \frac{F_p'(z)}{F_p(z)} = \sum_{l=0}^{p} \left( \frac{1}{1-z} - \frac{M^{l+1}z^{M^{l+1}-1}}{1 - z^{M^{l+1}}} \right) + \sum_{l=0}^{p} \frac{M^l \xi^{N^l} z^{M^l-1}}{1 - \xi^{N^l} z^{M^l}}. \]
		To find the limit as $z \to 1$, we use the identity $\lim_{z \to 1} \left( \frac{1}{1-z} - \frac{K z^{K-1}}{1 - z^K} \right) = \frac{K-1}{2}$, which can be verified with L'Hôpital's rule. Applying this to our expression gives
		\[ \lim_{z \to 1} \frac{F_p'(z)}{F_p(z)} = \sum_{l=0}^{p} \frac{M^{l+1}-1}{2} + \sum_{l=0}^{p} \frac{M^l \xi^{N^l}}{1 - \xi^{N^l}}. \]
		The first moment is then $F_p'(1) = F_p(1) \cdot \lim_{z \to 1} (F_p'(z)/F_p(z))$. Substituting the value of $F_p(1)$ from the first part of the proof gives the stated result.
	\end{proof}
	
\section{Finite-Difference and Generating-Function Identities}
	
	\noindent The following theorem, which generalizes an identity from Vignat \cite{vignat2018sumsupposition}, provides a key connection between the weighted sum over the sequence $\xi^{\mathcal{B}_{M,N}(n)}$ and a sum involving finite differences.
	
	\begin{theorem}[Finite Difference Identity]\label{thm:finite_difference_identity}
		Let the sequence $(\beta_k^{(p)})$ be defined as before for integers $M, N \ge 1$, $p \ge 0$, such that $\operatorname{rad}(M) \nmid \operatorname{rad}(N)$. Let $\xi$ be a primitive $M$-th root of unity. For an arbitrary function $f$ and a scalar $y$, the following identity holds:
		\begin{equation}\label{eqn:finite-difference-identity}
		\sum_{n=0}^{M^{p+1}-1} \xi^{\mathcal{B}_{M,N}(n)} f(x+ny) = (-1)^{p+1} \sum_{k \ge 0} \beta_k^{(p)} \Delta_y^{p+1} f(x+ky), 
		\end{equation}
		where $\Delta_y^{p+1}$ is the $(p+1)$-th forward difference operator with step $y$, defined by $\Delta_y^{p+1} g(u) = \sum_{j=0}^{p+1} (-1)^{p+1-j} \binom{p+1}{j} g(u+jy)$.
	\end{theorem}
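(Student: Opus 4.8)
The plan is to read both sides of \eqref{eqn:finite-difference-identity} as the effect of one and the same polynomial in the shift operator acting on $f$, so that the identity collapses to the defining relation $G_p(z)=(1-z)^{p+1}F_p(z)$ from \eqref{defi:F_p(z)}. Let $E$ denote the forward shift by $y$, i.e. $(Eg)(u)=g(u+y)$, so that $(E^{k}g)(u)=g(u+ky)$ and $\Delta_y=E-1$. Because $G_p$ has degree $M^{p+1}-1$ and $F_p$ is a genuine polynomial (Theorem~\ref{zeros of G_p(z) at z=1} shows $(1-z)^{p+1}\mid G_p(z)$, so the quotient $F_p$ has finite support), the operators $G_p(E)$, $F_p(E)$ and $(E-1)^{p+1}=\Delta_y^{p+1}$ are finite $\mathbb{C}$-linear combinations of powers of $E$; in particular they commute and no regularity or growth hypothesis on $f$ is required.

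With this notation, the left-hand side of \eqref{eqn:finite-difference-identity} is precisely $(G_p(E)f)(x)$, since $G_p(z)=\sum_{n=0}^{M^{p+1}-1}\xi^{\mathcal{B}_{M,N}(n)}z^{n}$ by \eqref{defi:G_p(z)}. For the right-hand side, pull the operator $\Delta_y^{p+1}$ out of the finite sum over $k$ (it commutes with each $E^{k}$) and use $\Delta_y^{p+1}=(E-1)^{p+1}=(-1)^{p+1}(1-E)^{p+1}$ to get
\[
(-1)^{p+1}\sum_{k\ge 0}\beta_k^{(p)}\,\Delta_y^{p+1}f(x+ky)
=\bigl((1-E)^{p+1}F_p(E)\,f\bigr)(x).
\]
Hence \eqref{eqn:finite-difference-identity} is nothing but the operator identity $G_p(E)=(1-E)^{p+1}F_p(E)$ evaluated at $f$ and $x$, which is the polynomial identity $G_p(z)=(1-z)^{p+1}F_p(z)$ of \eqref{defi:F_p(z)} with $z$ replaced by $E$.

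Equivalently — and this is how I would justify the operator substitution rigorously for an arbitrary function $f$ — one compares the coefficient of each translate $f(x+jy)$, $j\ge 0$, on the two sides. On the left it is $[z^{j}]G_p(z)$, equal to $\xi^{\mathcal{B}_{M,N}(j)}$ for $0\le j\le M^{p+1}-1$ and $0$ otherwise; on the right, expanding $\Delta_y^{p+1}f(x+ky)=\sum_{i=0}^{p+1}(-1)^{p+1-i}\binom{p+1}{i}f(x+(k+i)y)$ and collecting the contributions with $k+i=j$ gives $\sum_{i}(-1)^{i}\binom{p+1}{i}\beta_{j-i}^{(p)}=[z^{j}]\bigl((1-z)^{p+1}F_p(z)\bigr)$. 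These agree for every $j$ by \eqref{defi:F_p(z)}. I do not expect a genuine obstacle here: all the mathematical content sits in the factorization of $G_p$ proved earlier, and what remains is the bookkeeping of matching coefficients, together with the observation that the finite support of $(\beta_k^{(p)})$ makes every rearrangement above a finite sum.
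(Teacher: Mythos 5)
Your proof is correct, and it reaches the identity by a cleaner route than the paper does, though the underlying content is the same. The paper's proof substitutes the inversion formula \eqref{second_convolution_identities} into the left-hand side and then performs two interchanges of summation plus a change of variable, invoking the finite support of $(\beta_k^{(p)})$ to extend a truncated inner sum. You instead observe that both sides are the same polynomial in the shift operator $E$ applied to $f$ and evaluated at $x$, so the identity is literally $G_p(z)=(1-z)^{p+1}F_p(z)$ with $z$ replaced by $E$; your fallback coefficient-matching argument (comparing the weight of each translate $f(x+jy)$) is exactly the Cauchy-product computation that the paper carries out by hand, and it correctly covers the range $j\ge M^{p+1}$ where both coefficients vanish because the factorization is an identity of polynomials. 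The two small points worth checking both go through: the sign bookkeeping $(E-1)^{p+1}=(-1)^{p+1}(1-E)^{p+1}$ matches the paper's convention for $\Delta_y^{p+1}$, and every rearrangement is a finite sum since $F_p$ is a polynomial by Theorem~\ref{zeros of G_p(z) at z=1}. What your formulation buys is transparency: it makes clear that the theorem carries no analytic content about $f$ and isolates the factorization of $G_p$ as the only input, whereas the paper's version additionally leans on the already-proved inversion formula of Theorem~\ref{thm:convolution_identities}, which is itself just one family of coefficients of the same factorization.
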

	
	\begin{proof}
		The proof strategy is to substitute the convolution identity for $\xi^{\mathcal{B}_{M,N}(n)}$ into the left-hand side (LHS) and then manipulate the resulting double summation. We begin with the LHS and substitute the identity $\xi^{\mathcal{B}_{M,N}(n)} = \sum_{j=0}^{n} (-1)^j \binom{p+1}{j} \beta_{n-j}^{(p)}$. The sum over $j$ can be extended to $p+1$ since the binomial coefficient vanishes for $j > p+1$. After substitution, we interchange the order of summation:
		\begin{align*}
			\text{LHS} &= \sum_{n=0}^{M^{p+1}-1} f(x+ny) \left( \sum_{j=0}^{p+1} (-1)^j \binom{p+1}{j} \beta_{n-j}^{(p)} \right) \\
			&= \sum_{j=0}^{p+1} (-1)^j \binom{p+1}{j} \sum_{n=j}^{M^{p+1}-1} f(x+ny) \beta_{n-j}^{(p)}.
		\end{align*}
		In the inner sum, we perform the change of variable $k = n-j$, which implies $n = k+j$. The sum over $n$ becomes a sum over $k$ starting from $0$. This yields
		\[ \text{LHS} = \sum_{j=0}^{p+1} (-1)^j \binom{p+1}{j} \sum_{k=0}^{M^{p+1}-1-j} f(x+(k+j)y) \beta_{k}^{(p)}. \]
		We can again interchange the order of summation. Since $(\beta_k^{(p)})$ has finite support, we can sum $k$ over all non-negative integers and collect the terms involving $\beta_k^{(p)}$:
		\[ \text{LHS} = \sum_{k \ge 0} \beta_{k}^{(p)} \left( \sum_{j=0}^{p+1} (-1)^j \binom{p+1}{j} f(x+ky+jy) \right). \]
		The inner sum is now recognizable as being related to the finite difference operator. By factoring out a sign, we can match its standard definition:
		\[ \sum_{j=0}^{p+1} (-1)^j \binom{p+1}{j} f(x+ky+jy) = (-1)^{p+1} \sum_{j=0}^{p+1} (-1)^{p+1-j} \binom{p+1}{j} f(x+ky+jy). \]
		The sum on the right is precisely $(-1)^{p+1}\Delta_y^{p+1}f(x+ky)$. Substituting this back into our expression for the LHS completes the proof:
		\[ \text{LHS} = \sum_{k \ge 0} \beta_k^{(p)} \left( (-1)^{p+1} \Delta_y^{p+1} f(x+ky) \right) = (-1)^{p+1} \sum_{k \ge 0} \beta_k^{(p)} \Delta_y^{p+1} f(x+ky). \]
	\end{proof}

	The single-sum finite difference identity can be elegantly iterated to obtain a powerful result for multiple sums.  For this identity to be well-defined, the sums on the right-hand side must be finite. This is guaranteed by our prerequisite condition $\operatorname{rad}(M) \nmid \operatorname{rad}(N)$, which, as established in Theorem \ref{zeros of G_p(z) at z=1}, ensures that the coefficients $(\beta_k^{(p)})$ have finite support for any $p$.
	
	\begin{theorem}\label{Multiple Finite Difference Identity}
		Let $p_1, \dots, p_r$ be non-negative integers, let $y_1, \dots, y_r$ be arbitrary scalars, and let $\xi$ be a primitive $M$-th root of unity. For any function $f$ for which the following expressions are defined, the multiple summation identity holds:
		\begin{align*}
			&\sum_{n_1=0}^{M^{p_1+1}-1} \cdots \sum_{n_r=0}^{M^{p_r+1}-1} \xi^{\sum_{j=1}^r \mathcal{B}_{M,N}(n_j)} f\left(x + \sum_{j=1}^r n_j y_j\right) \\
			&= (-1)^{\sum_{j=1}^r (p_j+1)} \sum_{k_1, \dots, k_r} \left(\prod_{j=1}^r \beta_{k_j}^{(p_j)}\right) \left(\prod_{j=1}^r \Delta_{y_j}^{p_j+1}\right) f\left(x + \sum_{j=1}^r k_j y_j\right).
		\end{align*}
		Here, the sums over $k_j$ run up to $M^{p_j+1}-p_j-2$. The operator $\Delta_{y_j}^{p_j+1}$ is the $(p_j+1)$-th forward difference with step $y_j$, and the product of operators denotes an iterated application, acting on the function $f$ whose argument is parameterized by the variables $k_1, \dots, k_r$.
	\end{theorem}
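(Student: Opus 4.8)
The plan is to prove the identity by induction on the number of summation variables $r$, peeling off one sum at a time and invoking the single-variable statement of Theorem~\ref{thm:finite_difference_identity}. The base case $r=1$ is precisely that theorem. For the inductive step I would freeze the variables $n_1,\dots,n_{r-1}$ and apply Theorem~\ref{thm:finite_difference_identity} to the innermost sum over $n_r$, taking the role of $x$ to be $x+\sum_{j=1}^{r-1} n_j y_j$, the role of the step $y$ to be $y_r$, the parameter $p$ to be $p_r$, and the test function to be $f$ itself. This converts $\sum_{n_r=0}^{M^{p_r+1}-1}\xi^{\mathcal{B}_{M,N}(n_r)} f(x+\sum_{j=1}^{r-1}n_j y_j + n_r y_r)$ into $(-1)^{p_r+1}\sum_{k_r\ge 0}\beta_{k_r}^{(p_r)}\Delta_{y_r}^{p_r+1} f(x+\sum_{j=1}^{r-1}n_j y_j + k_r y_r)$, where the $k_r$-sum is finite because the sequence $(\beta_k^{(p_r)})$ has finite support (this is exactly the content of Theorem~\ref{zeros of G_p(z) at z=1}, which makes $F_{p_r}(z)$ a polynomial).

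Next I would pull the (finite) $k_r$-sum outside the remaining $(r-1)$-fold sum and, for each fixed $k_r$, introduce the auxiliary function $g_{k_r}(u):=\Delta_{y_r}^{p_r+1} f(u+k_r y_r)$. What is left is $\sum_{n_1,\dots,n_{r-1}}\xi^{\sum_{j=1}^{r-1}\mathcal{B}_{M,N}(n_j)}\,g_{k_r}\!\big(x+\sum_{j=1}^{r-1} n_j y_j\big)$, which is exactly the expression handled by the induction hypothesis with $r-1$ variables and test function $g_{k_r}$. Applying it yields $(-1)^{\sum_{j=1}^{r-1}(p_j+1)}\sum_{k_1,\dots,k_{r-1}}\big(\prod_{j=1}^{r-1}\beta_{k_j}^{(p_j)}\big)\big(\prod_{j=1}^{r-1}\Delta_{y_j}^{p_j+1}\big) g_{k_r}\!\big(x+\sum_{j=1}^{r-1}k_j y_j\big)$. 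Reinstating the definition of $g_{k_r}$ and recombining the two sign prefactors via $(-1)^{p_r+1}(-1)^{\sum_{j=1}^{r-1}(p_j+1)}=(-1)^{\sum_{j=1}^{r}(p_j+1)}$ gives the asserted right-hand side, provided one checks that the composed operator $\big(\prod_{j=1}^{r-1}\Delta_{y_j}^{p_j+1}\big)\Delta_{y_r}^{p_r+1}$ applied to $f(\,\cdot\,+k_r y_r)$ and then evaluated at $x+\sum_{j=1}^{r-1}k_j y_j$ equals $\big(\prod_{j=1}^{r}\Delta_{y_j}^{p_j+1}\big) f$ evaluated at $x+\sum_{j=1}^{r}k_j y_j$.

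The one point I would take care to spell out — and the only genuinely delicate step — is precisely this matching of the iterated difference operators, together with the legitimacy of the summation interchanges. Each operator $\Delta_{y_j}^{p_j+1}$ is a fixed polynomial in the translation operator $E_{y_j}\colon h\mapsto h(\,\cdot\,+y_j)$, and in the iterated form it acts only through the index $k_j$; since translations commute, the operators $\Delta_{y_1}^{p_1+1},\dots,\Delta_{y_r}^{p_r+1}$ commute with one another, so the product $\prod_{j=1}^{r}\Delta_{y_j}^{p_j+1}$ is unambiguous and its action is unchanged by the substitution of arguments performed above. As for the interchanges of sums: every $(\beta_k^{(p_j)})$ has finite support, hence at each stage only finitely many terms are nonzero and all reorderings are trivially valid. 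With these two observations in place the induction closes, and the remaining manipulations are the change of variables $k_j=n_j-j$-type shifts already carried out in the proof of Theorem~\ref{thm:finite_difference_identity}, now applied coordinate by coordinate.
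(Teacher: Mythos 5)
Your proposal is correct and follows essentially the same strategy as the paper: induction on $r$, using Theorem~\ref{thm:finite_difference_identity} as the base case and combining it with the inductive hypothesis via commutativity of the translation-built difference operators and the finite support of the $(\beta_k^{(p)})$. The only cosmetic difference is that you peel off the innermost sum first and apply the induction hypothesis second, whereas the paper does the reverse; both orderings are valid for the reasons you spell out.
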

	
	\begin{proof}
		The proof proceeds by induction on $r$, the number of summation variables.
		
		For the base case $r=1$, the identity reduces to
		\[ \sum_{n_1=0}^{M^{p_1+1}-1} \xi^{\mathcal{B}_{M,N}(n_1)} f(x+n_1y_1) = (-1)^{p_1+1} \sum_{k_1} \beta_{k_1}^{(p_1)} \Delta_{y_1}^{p_1+1} f(x+k_1y_1), \]
		which is precisely the single-sum identity established in Theorem \ref{thm:finite_difference_identity}. Thus, the base case holds.
		
		For the inductive step, assume the theorem holds for some integer $r \ge 1$. We aim to prove it for $r+1$. The left-hand side (LHS) for the $r+1$ case is
		\[ \text{LHS}_{r+1} = \sum_{n_1=0}^{M^{p_1+1}-1} \cdots \sum_{n_{r+1}=0}^{M^{p_{r+1}+1}-1} \xi^{\sum_{j=1}^{r+1} \mathcal{B}_{M,N}(n_j)} f\left(x + \sum_{j=1}^{r+1} n_j y_j\right). \]
		To leverage the inductive hypothesis, we can isolate the outermost sum over $n_{r+1}$ and treat the inner $r$-fold sum separately:
		\begin{align*} & \text{LHS}_{r+1}\\
			 &= \sum_{n_{r+1}=0}^{M^{p_{r+1}+1}-1} \xi^{\mathcal{B}_{M,N}(n_{r+1})} \left[ \sum_{n_1, \dots, n_r} \xi^{\sum_{j=1}^r \mathcal{B}_{M,N}(n_j)} f\left((x+n_{r+1}y_{r+1}) + \sum_{j=1}^r n_j y_j\right) \right]. \end{align*}
		The expression in the square brackets is an instance of the identity for $r$ sums, applied to the function $f$ but with its starting point shifted from $x$ to $x' = x+n_{r+1}y_{r+1}$. By the inductive hypothesis, we can replace this inner $r$-fold sum:
		\begin{align*}
			\text{Inner Sum} = (-1)^{\sum_{j=1}^r (p_j+1)} &\sum_{k_1, \dots, k_r} \left(\prod_{j=1}^r \beta_{k_j}^{(p_j)}\right) \\
			&\times \left(\prod_{j=1}^r \Delta_{y_j}^{p_j+1}\right) f\left((x+n_{r+1}y_{r+1}) + \sum_{j=1}^r k_j y_j\right).
		\end{align*}
		Substituting this back into the expression for $\text{LHS}_{r+1}$ gives a nested sum over $n_{r+1}$ and $k_1, \dots, k_r$. We can interchange the order of summation. The difference operators $\Delta_{y_j}^{p_j+1}$ for $j=1,\dots,r$ act on the argument of $f$ with respect to the variables $k_j$. They are linear and treat the variable $n_{r+1}$ as a constant parameter. Therefore, these operators commute with the summation over $n_{r+1}$:
		\begin{align*}
			\text{LHS}_{r+1} = (-1)^{\sum_{j=1}^r (p_j+1)} &\sum_{k_1, \dots, k_r} \left(\prod_{j=1}^r \beta_{k_j}^{(p_j)}\right) \left(\prod_{j=1}^r \Delta_{y_j}^{p_j+1}\right) \\
			&\times \left[ \sum_{n_{r+1}} \xi^{\mathcal{B}_{M,N}(n_{r+1})} f\left((x+\sum_{j=1}^r k_j y_j) + n_{r+1}y_{r+1}\right) \right].
		\end{align*}
		The new inner sum in brackets is now of the form required by our base case, Theorem \ref{thm:finite_difference_identity}. Specifically, it is the base case applied to the (already transformed) function $g(u) = (\prod_{j=1}^r \Delta_{y_j}^{p_j+1})f(u)$, with starting point $x'' = x+\sum_{j=1}^r k_j y_j$ and step $y_{r+1}$. Applying Theorem \ref{thm:finite_difference_identity} to this sum transforms it into
		\begin{align*}
			\text{Inner Sum}_{r+1} = (-1)^{p_{r+1}+1} \sum_{k_{r+1}} \beta_{k_{r+1}}^{(p_{r+1})} \Delta_{y_{r+1}}^{p_{r+1}+1} g\left(x'' + k_{r+1}y_{r+1}\right).
		\end{align*}
		Substituting this result back allows us to assemble the final expression. We combine the constant factors, the products of $\beta$ coefficients, and the products of difference operators:
		\begin{align*}
			\text{LHS}_{r+1} = &(-1)^{\sum_{j=1}^r (p_j+1)} \sum_{k_1, \dots, k_r} \left(\prod_{j=1}^r \beta_{k_j}^{(p_j)}\right) \\
			&\times \left[ (-1)^{p_{r+1}+1} \sum_{k_{r+1}} \beta_{k_{r+1}}^{(p_{r+1})} \left(\prod_{j=1}^{r+1} \Delta_{y_j}^{p_j+1}\right) f\left(x + \sum_{j=1}^{r+1} k_j y_j\right) \right] \\
			= &(-1)^{\sum_{j=1}^{r+1} (p_j+1)} \sum_{k_1, \dots, k_{r+1}} \left(\prod_{j=1}^{r+1} \beta_{k_j}^{(p_j)}\right) \left(\prod_{j=1}^{r+1} \Delta_{y_j}^{p_j+1}\right) f\left(x + \sum_{j=1}^{r+1} k_j y_j\right).
		\end{align*}
		This is exactly the statement of the theorem ~\ref{Multiple Finite Difference Identity} for $r+1$. By the principle of mathematical induction, the theorem holds for all integers $r \ge 1$.
	\end{proof}

\section{Evaluation of Two Multivariate Polynomial Sums}
	\subsection{The first general conjecture}
	 \noindent The following proposition generalizes a conjecture by Byszewski et al.~\cite{byszewski-2015-identities}, which was proven by Vignat and Wakhare~\cite{vignat2018sumsupposition}. The original conjecture corresponds to the special case $N=1$.  
	
	This proposition gives a closed-form evaluation for a complex multiple sum, leveraging the finite difference identities previously established. For the result to be well-defined, we require that the sums of the coefficients $\beta_k^{(p_j)}$ are non-singular. This is guaranteed by the condition $\operatorname{rad}(M) \nmid \operatorname{rad}(N)$, which ensures that the terms $(1-\xi^{N^l})$ appearing in the denominators of the formulas for these sums are never zero.
	
	\begin{proposition}
		 Extension of conjecture by Vignat \cite{vignat2018sumsupposition}. Let $p_1, \dots, p_r$ be non-negative integers and let $y_1, \dots, y_r$ be arbitrary scalars. Let $D = \sum_{j=1}^r (p_j+1)$. Assuming $\operatorname{rad}(M) \nmid \operatorname{rad}(N)$, we have the identity:
		\begin{equation}\label{eqn:the-first-conjecture}
			\begin{split}
			\lefteqn{ \sum_{n_1=0}^{M^{p_1+1}-1} \cdots \sum_{n_r=0}^{M^{p_r+1}-1} \xi^{\sum_{j=1}^r \mathcal{B}_{M,N}(n_j)} \left(x + \sum_{j=1}^r n_j y_j\right)^D } \\
			&= (-1)^D D! \left( \prod_{j=1}^r y_j^{p_j+1} \right) \left( \prod_{j=1}^r \frac{M^{\frac{(p_j+1)(p_j+2)}{2}}}{\prod_{l=0}^{p_j}(1 - \xi^{N^l})} \right).
			\end{split}
		\end{equation}
	\end{proposition}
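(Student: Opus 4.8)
The plan is to feed the monomial test function $f(u)=u^{D}$, where $D=\sum_{j=1}^r(p_j+1)$, into the Multiple Finite Difference Identity (Theorem~\ref{Multiple Finite Difference Identity}), and then to observe that the iterated difference operator on its right-hand side collapses to a single constant. Applying that theorem with this $f$, the left-hand side of \eqref{eqn:the-first-conjecture} becomes
\[
(-1)^{D}\sum_{k_1,\dots,k_r}\Bigl(\prod_{j=1}^r\beta_{k_j}^{(p_j)}\Bigr)\Bigl(\prod_{j=1}^r\Delta_{y_j}^{p_j+1}\Bigr)\Bigl(x+\sum_{j=1}^r k_j y_j\Bigr)^{D},
\]
the sums over the $k_j$ being finite because, as recorded in Theorem~\ref{Multiple Finite Difference Identity} and Proposition~\ref{thm:beta_moments}, each $(\beta_k^{(p_j)})$ has finite support.

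The key step is to evaluate $\bigl(\prod_{j=1}^r\Delta_{y_j}^{p_j+1}\bigr)$ applied to the degree-$D$ polynomial $u\mapsto u^{D}$. Writing each $\Delta_{y_j}$ as $E^{y_j}-I$, where $E^{y_j}$ denotes the shift $g(u)\mapsto g(u+y_j)$, the operators commute, and $\prod_{j=1}^r(E^{y_j}-I)^{p_j+1}$, expanded in powers of $d/du$, has lowest-order term $\prod_{j=1}^r\bigl(y_j\,d/du\bigr)^{p_j+1}=\bigl(\prod_j y_j^{p_j+1}\bigr)(d/du)^{D}$, every remaining term carrying $(d/du)^{k}$ with $k>D$. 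Those higher terms annihilate $u^{D}$, while the lowest term sends it to $\bigl(\prod_j y_j^{p_j+1}\bigr)D!$, so that
\[
\Bigl(\prod_{j=1}^r\Delta_{y_j}^{p_j+1}\Bigr)\Bigl(x+\sum_{j=1}^r k_j y_j\Bigr)^{D}=D!\prod_{j=1}^r y_j^{p_j+1},
\]
a constant independent of $x$ and of every index $k_j$. An equivalent route is to first prove by a short induction the elementary lemma that an $m$-th difference annihilates polynomials of degree below $m$ and maps $u^{m}$ to $m!\,y^{m}$, then to peel off the factors $\Delta_{y_j}^{p_j+1}$ one at a time, noting that after $i$ of them the argument of $f$ is still a polynomial of the reduced degree $D-\sum_{j\le i}(p_j+1)$ in the remaining variables.

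Since the difference operator now contributes only this constant, the multiple sum factors completely into a product of one-variable sums, giving
\[
(-1)^{D}D!\Bigl(\prod_{j=1}^r y_j^{p_j+1}\Bigr)\prod_{j=1}^r\Bigl(\sum_{k_j\ge 0}\beta_{k_j}^{(p_j)}\Bigr),
\]
and it remains only to substitute the closed form for the $0$-th moment of each $(\beta_k^{(p_j)})$ from Proposition~\ref{thm:beta_moments}(i), namely $\sum_{k\ge0}\beta_k^{(p_j)}=M^{(p_j+1)(p_j+2)/2}\prod_{l=0}^{p_j}(1-\xi^{N^l})^{-1}$; these are well defined precisely because $\operatorname{rad}(M)\nmid\operatorname{rad}(N)$ forces $\xi^{N^l}\ne1$ for every $l$. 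Collecting the factors reproduces \eqref{eqn:the-first-conjecture}. I do not anticipate a genuine obstacle: the only delicate point is the bookkeeping that a mixed-step iterated difference of total order $D$ acting on a degree-$D$ polynomial yields exactly the constant $D!\prod_j y_j^{p_j+1}$, free of $x$ and the $k_j$, so that the sums decouple and the factorial and signs combine correctly with the prefactor $(-1)^{D}$ from Theorem~\ref{Multiple Finite Difference Identity}; everything else is a direct substitution of formulas already established.
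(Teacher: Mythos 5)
Your proposal is correct and follows essentially the same route as the paper: apply Theorem~\ref{Multiple Finite Difference Identity} to $f(u)=u^{D}$, reduce the iterated difference of the degree-$D$ polynomial to the constant $D!\prod_{j}y_j^{p_j+1}$, factor the resulting sums, and insert the $0$-th moment formula from Proposition~\ref{thm:beta_moments}(i). Your shift-operator expansion of $\prod_j(E^{y_j}-I)^{p_j+1}$ actually supplies a cleaner justification of the key constant-collapse step than the paper, which simply cites it as a standard fact from the calculus of finite differences.
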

	
	\begin{remark}
		The notable feature of this result is that the right-hand side is a constant that does not depend on the variable $x$.
	\end{remark}
	
	\begin{proof}
		The proof proceeds by applying the multiple finite difference identity (Theorem \ref{Multiple Finite Difference Identity}) with a carefully chosen function $f$ that causes the right-hand side of the identity to collapse into a constant.
		
		The total order of the iterated finite difference operator in Theorem \ref{Multiple Finite Difference Identity} is $D = \sum_{j=1}^r (p_j+1)$. We choose the function $f$ to be a monic polynomial of precisely this degree: $f(u) = u^D$. With this choice, the left-hand side of the identity in Theorem \ref{Multiple Finite Difference Identity} becomes exactly the left-hand side of the proposition we aim to prove.
		
		We now focus on evaluating the right-hand side of the identity from Theorem \ref{Multiple Finite Difference Identity}, which is
		\[ (-1)^D \sum_{k_1, \dots, k_r} \left(\prod_{j=1}^r \beta_{k_j}^{(p_j)}\right) \left(\prod_{j=1}^r \Delta_{y_j, k_j}^{p_j+1}\right) f\left(x + \sum_{j=1}^r k_j y_j\right). \]
		The crucial step is to evaluate the iterated finite difference operator acting on our chosen function. Let $g(k_1, \dots, k_r) = f(x + \sum_{i=1}^r k_i y_i) = (x + \sum_{i=1}^r k_i y_i)^D$. The operator $\prod_{j=1}^r \Delta_{y_j, k_j}^{p_j+1}$ represents a sequence of forward-difference operations, where $\Delta_{y_j, k_j}^{p_j+1}$ acts on the function's dependence on the variable $k_j$ with a step size that produces shifts of $y_j$ in the argument of $f$.
		
		It is a fundamental result from the calculus of finite differences that when an iterated difference operator of total order $D$ is applied to a polynomial of total degree $D$, the result is a constant. This constant is equal to $D!$ multiplied by the coefficient of the term whose powers match the orders of the difference operators. In our case, the polynomial is $g(k_1, \dots, k_r)$, and the operator has order $p_j+1$ corresponding to the variable $k_j$ (with coefficient $y_j$). The result of this operation is therefore
		\[ \left(\prod_{j=1}^r \Delta_{y_j, k_j}^{p_j+1}\right) \left(x + \sum_{i=1}^r k_i y_i\right)^D = D! \prod_{j=1}^r y_j^{p_j+1}. \]
		This resulting constant is independent of $x$ and all of the summation variables $k_j$.
		
		Substituting this constant value back into the expression for the right-hand side gives
		\[ (-1)^D \sum_{k_1, \dots, k_r} \left(\prod_{j=1}^r \beta_{k_j}^{(p_j)}\right) \left( D! \prod_{j=1}^r y_j^{p_j+1} \right). \]
		The constant terms can be factored out of the summation, leaving
		\[ (-1)^D D! \left(\prod_{j=1}^r y_j^{p_j+1}\right) \left[ \sum_{k_1, \dots, k_r} \prod_{j=1}^r \beta_{k_j}^{(p_j)} \right]. \]
		The multiple sum over the product of the $\beta$ coefficients is separable and can be rewritten as a product of individual sums:
		\[ \sum_{k_1, \dots, k_r} \prod_{j=1}^r \beta_{k_j}^{(p_j)} = \prod_{j=1}^r \left( \sum_{k_j=0}^{M^{p_j+1}-p_j-2} \beta_{k_j}^{(p_j)} \right). \]
		Each of these individual sums is the sum of all coefficients of the polynomial $F_{p_j}(z)$, which is simply $F_{p_j}(1)$. We have previously calculated this value (the 0-th moment of the sequence $\beta_k^{(p_j)}$):
		\[ \sum_{k_j} \beta_{k_j}^{(p_j)} = F_{p_j}(1) = \frac{M^{\frac{(p_j+1)(p_j+2)}{2}}}{\prod_{l=0}^{p_j}(1 - \xi^{N^l})}. \]
		Substituting this result into the product yields
		\[ \prod_{j=1}^r \left( \sum_{k_j} \beta_{k_j}^{(p_j)} \right) = \prod_{j=1}^r \left( \frac{M^{\frac{(p_j+1)(p_j+2)}{2}}}{\prod_{l=0}^{p_j}(1 - \xi^{N^l})} \right). \]
		Combining all these pieces gives the final expression for the right-hand side, which establishes the proposition.
	\end{proof}
	
	\subsection{The second conjecture}
	\noindent We now turn to proving a generalization of a second major identity, which was also conjectured in \cite{byszewski-2015-identities} and proven in \cite{vignat2018sumsupposition} for the case $N=1$. The proof strategy for the general case requires establishing a key vanishing property for certain weighted polynomial sums. We state and prove this property first.
	\begin{proposition}\label{prop:vanishing_poly_sum}
		Let $p \ge 1$. For any polynomial $P(u, v)$ with total degree $k \le p-1$, the following identity holds under the condition $\operatorname{rad}(M) \nmid \operatorname{rad}(N)$:
		\begin{equation}\label{eqn:vanishing_poly_sum}
			\sum_{n=0}^{M^p-1} P(n, \mathcal{B}_{M,N}(n)) \, \xi^{\mathcal{B}_{M,N}(n)} = 0.
		\end{equation}
	\end{proposition}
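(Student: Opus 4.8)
The plan is to reuse, in a slightly simpler form, the digit-expansion argument from the proof of Theorem~\ref{zeros of G_p(z) at z=1}. Since $n$ runs over $\{0,\dots,M^p-1\}$, every such $n$ has at most $p$ base-$M$ digits; I would write $n=\sum_{i=0}^{p-1}d_i M^i$ with $d_i\in\{0,\dots,M-1\}$, so that $\mathcal{B}_{M,N}(n)=\sum_{i=0}^{p-1}d_i N^i$. Both $n$ and $\mathcal{B}_{M,N}(n)$ are thus \emph{affine-linear} functions of the digit vector $(d_0,\dots,d_{p-1})$. Consequently $P\bigl(n,\mathcal{B}_{M,N}(n)\bigr)$ is the composition of a polynomial of total degree $k$ with two linear maps, hence it equals a polynomial $Q(d_0,\dots,d_{p-1})$ in the $p$ digit variables of total degree at most $k\le p-1$.

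Next, because $\xi^{\mathcal{B}_{M,N}(n)}=\prod_{i=0}^{p-1}\xi^{d_i N^i}$, summing over $n$ is exactly summing independently over $d_0,\dots,d_{p-1}\in\{0,\dots,M-1\}$. Expanding $Q$ into monomials $c_{\mathbf e}\prod_{i=0}^{p-1}d_i^{e_i}$ and interchanging sum and expansion, the contribution of each monomial factors as
\[
c_{\mathbf e}\prod_{i=0}^{p-1}\Bigl(\sum_{d=0}^{M-1} d^{e_i}\,(\xi^{N^i})^{d}\Bigr).
\]
The decisive point is a pigeonhole count: any monomial appearing in $Q$ satisfies $\sum_{i=0}^{p-1}e_i\le p-1$, while there are $p$ indices, so at least one exponent $e_{i_0}$ is $0$. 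For that index the corresponding factor is the bare geometric sum $\sum_{d=0}^{M-1}(\xi^{N^{i_0}})^{d}$. The hypothesis $\operatorname{rad}(M)\nmid\operatorname{rad}(N)$ supplies a prime $q\mid M$ with $q\nmid N$, hence $q\nmid N^{i_0}$, hence $M\nmid N^{i_0}$, so $\xi^{N^{i_0}}\neq 1$ and this sum equals $\dfrac{(\xi^{N^{i_0}})^{M}-1}{\xi^{N^{i_0}}-1}=\dfrac{1-1}{\xi^{N^{i_0}}-1}=0$. Thus every monomial of $Q$ contributes $0$, and the whole sum vanishes.

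I do not expect a genuine obstacle here; the argument is structurally the same as Theorem~\ref{zeros of G_p(z) at z=1}. The only step warranting care is the degree bookkeeping: one must confirm that substituting the two linear forms $n$ and $\mathcal{B}_{M,N}(n)$ into $P$ cannot push the total degree in the digits above $k$, and then line this bound up against the number $p$ of available digit slots so that the pigeonhole argument forces a zero exponent. (As a sanity check, the degenerate case $p=1$ gives $k\le 0$, so $P$ is constant, $\mathcal{B}_{M,N}(n)=n$ on $\{0,\dots,M-1\}$, and the sum is a multiple of $\sum_{n=0}^{M-1}\xi^{n}=0$, consistent with the general argument via $i_0=0$.)
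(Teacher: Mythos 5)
Your proposal is correct and follows essentially the same route as the paper's proof: digit expansion, factorization of the sum over independent digits, a pigeonhole count showing some exponent $e_{i_0}$ vanishes, and the resulting zero geometric sum via $\xi^{N^{i_0}}\neq 1$. The only cosmetic difference is that you absorb the reduction to monomials $u^a v^b$ and the multinomial expansion into the single observation that $P$ composed with the two linear digit forms is a polynomial of total degree at most $k$ in the digits, which is a valid and slightly cleaner way to do the same bookkeeping.
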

	\begin{proof}
		By linearity, it is sufficient to prove the result for any monomial of the form $P(u,v) = u^a v^b$ where the total degree $a+b = k \le p-1$. Our goal is to show that:
		$$
		\sum_{n=0}^{M^p-1} n^a (\mathcal{B}_{M,N}(n))^b \, \xi^{\mathcal{B}_{M,N}(n)} = 0.
		$$
		The proof method mirrors that of Theorem 3.1. The summation is over integers $n$ with at most $p$ digits in base $M$. We express $n$ and $\mathcal{B}_{M,N}(n)$ in terms of the base-$M$ digits of $n$, $d_0, \dots, d_{p-1}$:
		$n = \sum_{i=0}^{p-1} d_i M^i \quad \text{and} \quad \mathcal{B}_{M,N}(n) = \sum_{i=0}^{p-1} d_i N^i.$
		The sum becomes a sum over all possible digit vectors $(d_0, \dots, d_{p-1})$:
		\begin{equation*}
		\begin{split}
			&\sum_{n=0}^{M^p-1} n^a (\mathcal{B}_{M,N}(n))^b \, \xi^{\mathcal{B}_{M,N}(n)}\\
			&=\sum_{d_0=0}^{M-1} \cdots \sum_{d_{p-1}=0}^{M-1} \left(\sum_{i=0}^{p-1} d_i M^i\right)^a \left(\sum_{i=0}^{p-1} d_i N^i\right)^b \left(\prod_{i=0}^{p-1} \xi^{d_i N^i}\right).
		\end{split}
		\end{equation*}
		Expand the monomial $n^a (\mathcal{B}_{M,N}(n))^b$ as
		
		\[
		\sum_{\substack{s_0+\dots+s_{p-1}=a\\[2pt]
				t_0+\dots+t_{p-1}=b}}
		\frac{a!}{s_0!\dots s_{p-1}!}\,
		\frac{b!}{t_0!\dots t_{p-1}!}
		\prod_{i=0}^{p-1}
		d_i^{\,s_i+t_i}\,
		M^{is_i}\,N^{it_i}.
		\]
		
		Introduce the combined exponents
		
		\[
		e_i:=s_i+t_i \quad(0\le i\le p-1).
		\]
		
		Then
		
		\[
		\sum_{i=0}^{p-1}e_i
		=\sum s_i+\sum t_i
		=a+b
		=k \le p-1.
		\]
		
		Therefore  
		we have the expansion
		\begin{equation}\label{multinomial expansion of prop 5.3}
			\begin{split}
				&\sum_{n=0}^{M^p-1} n^a (\mathcal{B}_{M,N}(n))^b \, \xi^{\mathcal{B}_{M,N}(n)}\\
				&=\sum_{d_0=0}^{M-1} \cdots \sum_{d_{p-1}=0}^{M-1}\sum_{\substack{e_0+\dots+e_{p-1}=k}}
				C(\{e_i\})\,
				\prod_{i=0}^{p-1}
				\left[
				\sum_{d=0}^{M-1}
				d^{\,e_i}\,
				(\xi^{N^{\,i}})^{d}
				\right],
			\end{split}
		\end{equation}
		where \(C(\{e_i\})\) is an irrelevant positive constant
		(\(a!\,b!\) times multinomial factors and powers of \(M^i,N^i\)).
		
		Because there are \(p\) indices but the sum of the \(e_i\)’s is at most
		\(p-1\), **at least one** index \(i_0\in\{0,\dots ,p-1\}\) satisfies
		\(e_{i_0}=0\).
		
		For that index the inner bracket in \eqref{multinomial expansion of prop 5.3} becomes the geometric sum
		
		\[
		\sum_{d=0}^{M-1}(\xi^{N^{\,i_0}})^{d}
		=\frac{(\xi^{N^{\,i_0}})^{M}-1}{\xi^{N^{\,i_0}}-1}=0.
		\]
		Every term in the outer sum (\ref{multinomial expansion of prop 5.3}) contains that zero factor,so the whole expression is \(0\).
	\end{proof}
	
	To formalize the proof of the main identity, we first define a key summation object.
	
	\begin{definition}\label{def:S_pl}
		For integers $p,l\ge0$, define the sum
		$$
		S_{p,l}(x,y) = \sum_{n=0}^{M^{p+1}-1}\,
		\xi^{\mathcal{B}_{M,N}(n)}
		\bigl(\,\mathcal{B}_{M,N}(n)\,x+ny\,\bigr)^{l}.
		$$
	\end{definition}
	
	\begin{proposition}\label{prop:S_p_p+1_closed_form}
		For $p \ge 0$, under the condition $\operatorname{rad}(M) \nmid \operatorname{rad}(N)$, the sum $S_{p,p+1}(x,y)$ has the closed form:
		\[
		S_{p,p+1}(x,y)=
		(-1)^{p+1}(p+1)!\,M^{p+1} 
		\frac{\displaystyle\prod_{j=0}^{p}\bigl(N^{j}x+M^{j}y\bigr)}
		{\displaystyle\prod_{j=0}^{p}\bigl(1-\xi^{N^{j}}\bigr)}.
		\]
	\end{proposition}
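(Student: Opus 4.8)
The plan is to bypass the finite-difference machinery of Section~4 (which is adapted to arguments of the shape $x+ny$ and does not accommodate the extra $\mathcal{B}_{M,N}(n)$ appearing inside the power) and instead attack $S_{p,p+1}(x,y)$ directly by the digit-expansion method already used in Theorem~\ref{zeros of G_p(z) at z=1} and Proposition~\ref{prop:vanishing_poly_sum}. The key observation is that the linear form inside the power factors through the base-$M$ digits of $n$: writing $n=\sum_{i=0}^{p}d_iM^i$ with $d_i\in\{0,\dots,M-1\}$, we have $\mathcal{B}_{M,N}(n)=\sum_{i=0}^{p}d_iN^i$, hence
\[
\mathcal{B}_{M,N}(n)\,x+n\,y=\sum_{i=0}^{p}d_i\bigl(N^i x+M^i y\bigr)=\sum_{i=0}^{p}d_i a_i,\qquad a_i:=N^i x+M^i y .
\]
After this substitution the sum over $n\in\{0,\dots,M^{p+1}-1\}$ becomes an unrestricted sum over digit vectors $(d_0,\dots,d_p)$, while the exponential weight splits as $\xi^{\mathcal{B}_{M,N}(n)}=\prod_{i=0}^p\xi^{d_iN^i}$.

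Next I would expand $\bigl(\sum_{i=0}^p d_i a_i\bigr)^{p+1}$ by the multinomial theorem and interchange the order of summation, so that $S_{p,p+1}(x,y)$ becomes a sum over multi-indices $(k_0,\dots,k_p)$ with $k_0+\dots+k_p=p+1$ of
\[
\binom{p+1}{k_0,\dots,k_p}\Bigl(\prod_{i=0}^p a_i^{k_i}\Bigr)\prod_{i=0}^p\Bigl(\sum_{d=0}^{M-1}d^{k_i}\,(\xi^{N^i})^d\Bigr).
\]
Exactly as in the earlier proofs, whenever some $k_{i_0}=0$ the corresponding inner factor is the geometric sum $\sum_{d=0}^{M-1}(\xi^{N^{i_0}})^d$, which vanishes because $\operatorname{rad}(M)\nmid\operatorname{rad}(N)$ forces $\xi^{N^{i_0}}\ne1$ (for $i_0=0$ this is just $\xi\ne1$, valid since $M\ge2$). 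A counting argument then pins down the surviving terms: there are $p+1$ indices, their exponents sum to $p+1$, and all must be positive, so the only contribution comes from $k_0=\dots=k_p=1$, with multinomial coefficient $(p+1)!$. This collapses the sum to
\[
S_{p,p+1}(x,y)=(p+1)!\,\Bigl(\prod_{i=0}^p(N^i x+M^i y)\Bigr)\prod_{i=0}^p\Bigl(\sum_{d=0}^{M-1}d\,(\xi^{N^i})^d\Bigr).
\]

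Finally I would evaluate the elementary sum $\sum_{d=0}^{M-1}d\,\zeta^d$ for an $M$-th root of unity $\zeta=\xi^{N^i}\ne1$: differentiating the finite geometric series $\sum_{d=0}^{M-1}z^d=(z^M-1)/(z-1)$, multiplying by $z$, and simplifying using $\zeta^M=1$ gives $\sum_{d=0}^{M-1}d\,\zeta^d=\dfrac{M}{\zeta-1}=\dfrac{-M}{\,1-\xi^{N^i}\,}$. Taking the product over $i=0,\dots,p$ yields the factor $(-1)^{p+1}M^{p+1}\big/\prod_{i=0}^p(1-\xi^{N^i})$, and assembling the pieces produces exactly the claimed closed form. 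I do not expect a genuine obstacle here; the only steps requiring care are the bookkeeping in the multinomial expansion and the interchange of summations, and making sure the non-vanishing $\xi^{N^i}\ne1$ for all $0\le i\le p$ is deduced from the radical hypothesis before any division is performed.
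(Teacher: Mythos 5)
Your proof is correct, and it takes a genuinely more direct route than the paper. The paper peels off only the top base-$M$ digit, writing $n=k+dM^{p}$, derives the recurrence $S_{p,l}=\sum_{m}\binom{l}{m}(N^{p}x+M^{p}y)^{l-m}a_{p,l-m}S_{p-1,m}$, kills all but one term using Proposition~\ref{prop:vanishing_poly_sum} together with $a_{p,0}=0$, and then iterates down to the base case $S_{0,1}$. You instead expand over all $p+1$ digits at once: the crucial observation $\mathcal{B}_{M,N}(n)x+ny=\sum_{i}d_{i}(N^{i}x+M^{i}y)$ turns the argument of the power into a single linear form in the digits, after which the multinomial expansion and the vanishing geometric sums (valid for every $0\le i\le p$ under $\operatorname{rad}(M)\nmid\operatorname{rad}(N)$, including $i=0$ since $M\ge 2$) leave only the multi-index $k_{0}=\dots=k_{p}=1$ with coefficient $(p+1)!$. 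Your evaluation $\sum_{d=0}^{M-1}d\,\zeta^{d}=M/(\zeta-1)$ matches the paper's $a_{j,1}$. What your approach buys is a self-contained one-shot computation that does not need Proposition~\ref{prop:vanishing_poly_sum} as a separate lemma (its content is absorbed into the ``some $k_{i}=0$'' vanishing); what the paper's recurrence buys is reusability, since the same recurrence with the vanishing of $S_{p-1,m}$ for small $m$ is also what drives the collapse in the proof of Theorem~\ref{Extension of second conjecture}.
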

	
	\begin{proof}
		First, we derive a basic recurrence. We write every integer $n$ in the summation range uniquely as $n=k+dM^{p}$ with $0\le k<M^{p}$ and $0\le d<M$. Since $ \mathcal{B}_{M,N}(n)=\mathcal{B}_{M,N}(k)+dN^{p}$, we have
		\[
		\bigl( \mathcal{B}_{M,N}(n)x+ny\bigr)^l
		=\sum_{m=0}^{l}\binom{l}{m}
		\bigl( \mathcal{B}_{M,N}(k)x+ky\bigr)^{m}
		\bigl(N^{p}x+M^{p}y\bigr)^{l-m}d^{l-m}.
		\]
		Let $a_{p,j}:= \sum_{d=0}^{M-1} d^{\,j}\,(\xi^{N^{p}})^{d}$. Summing over $k$ and then $d$ produces the recurrence relation:
		\begin{equation}\label{eq:S_pl_recurrence}
			S_{p,l}(x,y) = \sum_{m=0}^{l} \binom{l}{m} \bigl(N^{p}x+M^{p}y\bigr)^{l-m} a_{p,l-m} S_{p-1,m}(x,y).
		\end{equation}
		By Proposition~\ref{prop:vanishing_poly_sum}, for $0\le m\le p-1$, the inner sums vanish: $S_{p-1,m}(x,y)=0$. Moreover, the condition $\operatorname{rad}(M)\nmid\operatorname{rad}(N)$ implies $\xi^{N^{p}}\neq1$, so the geometric sum $a_{p,0}=0$.
		
		Now we insert $l=p+1$ into the recurrence \eqref{eq:S_pl_recurrence}. All terms for which $m\le p-1$ are zero. The term for $m=p+1$ contains the factor $a_{p, (p+1)-(p+1)} = a_{p,0}=0$, so it also vanishes. Hence, only the term for $m=p$ survives, giving
		\[
		S_{p,p+1}(x,y)=
		\binom{p+1}{p}
		\bigl(N^{p}x+M^{p}y\bigr)
		a_{p,1}
		S_{p-1,p}(x,y).
		\]
		We iterate this relation down to the base case $p=0$. Noting that $a_{j,1}=M/(\xi^{N^{j}}-1)$ and that the base case is $S_{0,1}(x,y)=(x+y)a_{0,1}$, the full product becomes:
		\begin{align*}
			S_{p,p+1}(x,y) &= (p+1) \bigl(N^{p}x+M^{p}y\bigr) a_{p,1} \cdot S_{p-1,p}(x,y) \\
			&= (p+1)! \left( \prod_{j=1}^{p} (N^j x + M^j y) a_{j,1} \right) S_{0,1}(x,y) \\
			&= (p+1)! \, M^{p+1} \prod_{j=0}^{p} \frac{N^j x + M^j y}{\xi^{N^j}-1}.
		\end{align*}
		Factoring out $(-1)^{p+1}$ from the denominator yields the desired result.
	\end{proof}

	Another extended conjecture is the following result.  
	This theorem provides a closed-form evaluation for a multivariate polynomial sum, generalizing a result from the one-dimensional case. The identity relies on the prerequisite condition that $\operatorname{rad}(M) \nmid \operatorname{rad}(N)$, which ensures that the sequences $(\beta_k^{(p_j)})$ are well-defined and that various sums appearing in the proof are non-singular.
	
	\begin{theorem}\label{Extension of second conjecture}
		Extension of second conjecture by \cite{byszewski-2015-identities,vignat2018sumsupposition}. Let $p_1, \dots, p_r$ be non-negative integers, and let $x_1, \dots, x_r$ and $y_1, \dots, y_r$ be arbitrary scalars. Define the total degree $D_r = \sum_{j=1}^r (p_j+1)$. Under the prerequisite condition, we have:
		\begin{align*}
			\sum_{n_1, \dots, n_r} &\xi^{\sum_{j=1}^r \mathcal{B}_{M,N}(n_j)} \left( \sum_{j=1}^r \left(\mathcal{B}_{M,N}(n_j)x_j + n_j y_j\right) \right)^{D_r} \\
			&= (-1)^{D_r} D_r!   M^{D_r} \frac{\displaystyle \prod_{j=1}^r \prod_{i=0}^{p_j} \left(N^i x_j + M^i y_j \right)}{\displaystyle \prod_{j=1}^r \prod_{i=0}^{p_j} \left(1-\xi^{N^i}\right)},
		\end{align*}
		where the sum for each $n_j$ is over the range $\{0, \dots, M^{p_j+1}-1\}$.
	\end{theorem}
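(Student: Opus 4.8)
The plan is to reduce this $r$-variable sum to a single product of the one-variable sums $S_{p_j,p_j+1}(x_j,y_j)$ already evaluated in Proposition~\ref{prop:S_p_p+1_closed_form}, by combining the separability of the $r$-fold summation with the vanishing result of Proposition~\ref{prop:vanishing_poly_sum}. Note first that for $r=1$ the statement is literally Proposition~\ref{prop:S_p_p+1_closed_form}; the general case is built from it.

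First I would set $A_j := \mathcal{B}_{M,N}(n_j)x_j + n_j y_j$ and expand the $D_r$-th power of $A_1+\dots+A_r$ by the multinomial theorem. Since the character $\xi^{\mathcal{B}_{M,N}(n_j)}$ and each monomial $A_j^{l_j}$ depend only on the single index $n_j$, the $r$-fold sum factors, and the left-hand side becomes
\[
\sum_{l_1+\dots+l_r=D_r}\binom{D_r}{l_1,\dots,l_r}\prod_{j=1}^r\left(\sum_{n_j=0}^{M^{p_j+1}-1}\xi^{\mathcal{B}_{M,N}(n_j)}A_j^{l_j}\right)=\sum_{l_1+\dots+l_r=D_r}\binom{D_r}{l_1,\dots,l_r}\prod_{j=1}^r S_{p_j,l_j}(x_j,y_j),
\]
where each inner sum is exactly an $S_{p_j,l_j}$ in the sense of Definition~\ref{def:S_pl}.

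Next I would apply the vanishing lemma. The quantity $A_j^{l_j}=(\mathcal{B}_{M,N}(n_j)x_j+n_jy_j)^{l_j}$ is a polynomial of total degree $l_j$ in $(n_j,\mathcal{B}_{M,N}(n_j))$, and the sum over $n_j$ runs to $M^{p_j+1}-1$, so Proposition~\ref{prop:vanishing_poly_sum} applied with its parameter $p$ equal to $p_j+1$ gives $S_{p_j,l_j}(x_j,y_j)=0$ whenever $l_j\le p_j$. Hence a term in the multinomial sum can be nonzero only if $l_j\ge p_j+1$ for every $j$; but $\sum_j l_j=D_r=\sum_j(p_j+1)$ then forces $l_j=p_j+1$ for all $j$. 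Only that single term survives, so the left-hand side collapses to
\[
\binom{D_r}{p_1+1,\dots,p_r+1}\prod_{j=1}^r S_{p_j,p_j+1}(x_j,y_j)=\frac{D_r!}{\prod_{j=1}^r(p_j+1)!}\prod_{j=1}^r S_{p_j,p_j+1}(x_j,y_j).
\]

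Finally I would substitute the closed form for $S_{p_j,p_j+1}(x_j,y_j)$ from Proposition~\ref{prop:S_p_p+1_closed_form}. Each factor $(p_j+1)!$ cancels against the corresponding factor in the denominator of the multinomial coefficient; the signs multiply to $(-1)^{\sum_j(p_j+1)}=(-1)^{D_r}$; the powers of $M$ multiply to $M^{\sum_j(p_j+1)}=M^{D_r}$; and the products $\prod_{i=0}^{p_j}(N^ix_j+M^iy_j)$ and $\prod_{i=0}^{p_j}(1-\xi^{N^i})$ over all $j$ assemble into exactly the double products on the right-hand side. There is no real analytic obstacle here: the only points requiring care are matching index ranges when invoking Proposition~\ref{prop:vanishing_poly_sum} (the summation length $M^{p_j+1}$ means its ``$p$'' is $p_j+1$, so the degree threshold is $p_j$, which is precisely what makes all $l_j>p_j$ necessary) and justifying the factorization of the $r$-fold sum, which is immediate because each summand is a product of functions of the individual $n_j$.
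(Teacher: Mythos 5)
Your proposal is correct and follows essentially the same route as the paper: expand the power of the sum, factor the $r$-fold summation into products of $S_{p_j,l_j}$, kill all but one term via the vanishing property, and substitute the closed form of $S_{p_j,p_j+1}$ from Proposition~\ref{prop:S_p_p+1_closed_form}. In fact your write-up is more complete than the paper's, which only carries out the binomial-theorem argument explicitly for $r=2$ and asserts that the logic extends; your multinomial-theorem version with the observation that $\sum_j l_j = D_r$ forces $l_j = p_j+1$ for all $j$ is precisely the missing general-$r$ argument, and your indexing of Proposition~\ref{prop:vanishing_poly_sum} (parameter $p_j+1$, degree threshold $p_j$) is handled correctly.
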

	
	\begin{proof}
	Instead of a formal inductive proof, we will demonstrate the mechanism of the theorem by examining the case with two summation variables ($r=2$). This approach clearly illustrates the algebraic cancellations that lead to the final closed form.
	
	Let $p_1, p_2$ be non-negative integers. We seek to evaluate the sum:
	$$
	\sum_{n_1=0}^{M^{p_1+1}-1} \sum_{n_2=0}^{M^{p_2+1}-1} \xi^{\mathcal{B}_{M,N}(n_1) + \mathcal{B}_{M,N}(n_2)} \left( (\mathcal{B}_{M,N}(n_1)x_1 + n_1y_1) + (\mathcal{B}_{M,N}(n_2)x_2 + n_2y_2) \right)^{D}
	$$
	where the total degree is $D = (p_1+1) + (p_2+1)$.
	
	To simplify notation, let
	$$
	A = \mathcal{B}_{M,N}(n_1)x_1 + n_1y_1 \quad \text{and} \quad B = \mathcal{B}_{M,N}(n_2)x_2 + n_2y_2.
	$$
	The polynomial term is $(A+B)^D$. We begin by applying the binomial theorem and then interchanging the order of summation:
	\begin{align*}
		\text{LHS} &= \sum_{n_1, n_2} \xi^{\mathcal{B}_{M,N}(n_1)} \xi^{\mathcal{B}_{M,N}(n_2)} \sum_{k=0}^{D} \binom{D}{k} A^k B^{D-k} \\
		&= \sum_{k=0}^{D} \binom{D}{k} \left( \sum_{n_1} \xi^{\mathcal{B}_{M,N}(n_1)} A^k \right) \left( \sum_{n_2} \xi^{\mathcal{B}_{M,N}(n_2)} B^{D-k} \right).
	\end{align*}
	The key insight is that almost every term in this sum over $k$ is zero. Let's analyze the two inner sums, which we recognize as instances of the $S_{p,l}(x,y)$ function defined previously.
	
	For the First Inner Sum,The sum over $n_1$ is precisely $S_{p_1, k}(x_1, y_1)$:
	$$
	\sum_{n_1=0}^{M^{p_1+1}-1} \xi^{\mathcal{B}_{M,N}(n_1)} (\mathcal{B}_{M,N}(n_1)x_1 + n_1y_1)^k = S_{p_1, k}(x_1, y_1).
	$$
	As established by the logic leading to the recurrence relation for $S_{p,l}$, this sum is zero if the polynomial degree is less than or equal to the parameter $p$. Thus, this sum is zero for all $k \le p_1$.
	
	Similarly the sum over $n_2$ is $S_{p_2, D-k}(x_2, y_2)$:
	$$
	\sum_{n_2=0}^{M^{p_2+1}-1} \xi^{\mathcal{B}_{M,N}(n_2)} (\mathcal{B}_{M,N}(n_2)x_2 + n_2y_2)^{D-k} = S_{p_2, D-k}(x_2, y_2).
	$$
	This sum is zero if its polynomial degree, $D-k$, is less than or equal to its parameter, $p_2$. This condition is $D-k \le p_2$, which is equivalent to:
	$$
	k \ge D - p_2 = (p_1+1 + p_2+1) - p_2 = p_1+2.
	$$
	Thus, this sum is zero for all $k \ge p_1+2$.
	
	Combining these two conditions, the product of the two inner sums is zero for every term in the binomial expansion except for the single term where $k=p_1+1$.
	
	Therefore, the entire sum over $k$ collapses to the one non-vanishing term at $k=p_1+1$:
	$$
	\text{LHS} = \binom{D}{p_1+1} \left( S_{p_1, p_1+1}(x_1, y_1) \right) \left( S_{p_2, D-(p_1+1)}(x_2, y_2) \right).
	$$
	The degree of the polynomial in the second term is $D-(p_1+1) = p_2+1$. So we have:
	$$
	\text{LHS} = \binom{D}{p_1+1} S_{p_1, p_1+1}(x_1, y_1) S_{p_2, p_2+1}(x_2, y_2).
	$$
	We now substitute the explicit formula for $S_{p,p+1}(x,y)$ that was derived in the previous section:
	\begin{align*}
		\text{LHS} = \frac{D!}{(p_1+1)!(p_2+1)!} & \times \left[(-1)^{p_1+1}(p_1+1)! M^{p_1+1} \frac{\prod_{i=0}^{p_1}(N^ix_1+M^iy_1)}{\prod_{i=0}^{p_1}(1-\xi^{N^i})}\right] \\
		& \times \left[(-1)^{p_2+1}(p_2+1)! M^{p_2+1} \frac{\prod_{i=0}^{p_2}(N^ix_2+M^iy_2)}{\prod_{i=0}^{p_2}(1-\xi^{N^i})}\right].
	\end{align*}
	The expression simplifies beautifully. The factorial terms $(p_1+1)!$ and $(p_2+1)!$ cancel. Combining the remaining factors gives the final result for the $r=2$ case:
	$$
	\text{LHS} = (-1)^D D! M^D \frac{\left(\prod_{i=0}^{p_1}(N^ix_1+M^iy_1)\right) \left(\prod_{i=0}^{p_2}(N^ix_2+M^iy_2)\right)}{\left(\prod_{i=0}^{p_1}(1-\xi^{N^i})\right)\left(\prod_{i=0}^{p_2}(1-\xi^{N^i})\right)}.
	$$
	This matches the general formula stated in Theorem ~\ref{Extension of second conjecture} for $r=2$, and the logic readily extends to any number of summation variables.
	\end{proof}

\section*{Acknowledgments}
	\noindent The author would like to thank Professor Zhiguo Liu for his guidance and support. This research was supported by the General Program of the National Natural Science Foundation of China (Grant No. 12371328).

\end{document}